\newtheorem{theorem}{Theorem}
\newtheorem{lemma}[theorem]{Lemma}
\newtheorem{corollary}[theorem]{Corollary}
\newtheorem{proposition}[theorem]{Proposition}
\newtheorem{conjecture}[theorem]{Conjecture}
\newtheorem{definition}[theorem]{Definition}
\title{Monochromatic paths  in $2$-edge-coloured  graphs and hypergraphs}
\author{Maya Stein\thanks{Department for Mathematical Engineering and Center for Mathematical Modeling, University of Chile. Support by ANID Regular Grant 1221905, by FAPESP-ANID Investigaci\'on Conjunta grant 2019/13364-7, and by ANID PIA CMM FB210005.}}
\date{}                                           
\begin{document}
\maketitle

\begin{abstract}
We answer a question of Gy\'arf\'as and S\'ark\"ozy from 2013 by showing that every $2$-edge-coloured complete $3$-uniform hypergraph can be partitioned into two monochromatic tight paths of different colours. \\ We also give a lower bound for the number of tight paths needed to partition any $2$-edge-coloured complete $r$-partite $r$-uniform hypergraph. Finally, we show that any $2$-edge-coloured complete bipartite graph has a partition into a monochromatic cycle and a monochromatic path, of different colours, unless the colouring is a split colouring.
\end{abstract}
\section{Introduction}
Monochromatic partitions and covering problems are a part of Ramsey theory in the wider sense, and the topic has seen increased activity in the last decade. As in the classical Ramsey problem, one wishes to find certain monochromatic subgraphs in a graph $G$ whose edges are coloured with two colours. Often this is the complete graph on $n$ vertices, $K_n$. 
Instead of just one monochromatic copy as in Ramsey's theorem, in monochromatic partitioning problems we aim to find a collection of such copies that together cover the whole vertex set of the host graph. 

At the centre of this area 
lies an observation by Gerencs\'er and Gy\'arf\'as~\cite{GG67}, which  
states that in any $2$-colouring of the edges of $K_n$ there are two disjoint monochromatic paths, of different colours, that together cover the vertex set of $K_n$. If we allow each of $\emptyset$, a vertex, an edge to count as a cycle, then the same statement holds  substituting the two paths with two cycles: this was conjectured by  Lehel in the 1970's (see~\cite{Aye99}), and resolved by Bessy and Thomass\'e in 2010~\cite{BT10}, with earlier asymptotic results in~\cite{Allen, LRS98}. 

Generalisations to hypergraphs have been considered, where several notions of cycles and paths exist, and there are 
some results for loose, $\ell$- and Berge-paths and cycles (see e.g.~\cite{BS18, Gy16, GS14, GS13, loose}), 
but for tight paths and cycles, not much is known. A {\it tight path} in an $r$-uniform hypergraph is a ordered sequence of vertices such that each $r$ consecutive vertices form an edge. {\it Tight cycles} are defined analogously on  cyclic sequences of vertices. Usually, sets of size between $1$ and $r-1$ are  allowed as tight paths or cycles, although in this paper, this is not needed, unless $n\le 5$. We will allow a tight path to be empty, i.e., to have no edges or vertices.

In 2013, Gy\'arf\'as and S\'ark\"ozy~\cite{GS13} suggested the following problem: `decide whether  there  is  a  partition  into  a  red  and  a  blue  tight  path  in  every  2-coloring  of  a complete 3-uniform hypergraph'. The question was reiterated in Gy\'arf\'as's survey~\cite{Gy16}, but no advances were made until  2019, when Bustamante, H\`an, and the  author~\cite{BHS16} proved that in any $2$-edge-colouring of the  $3$-uniform complete graph~$\mathcal K^{(3)}_n$,  two tight monochromatic cycles  of different colours partition almost all
 its vertices (all but $o(n)$ vertices, to be precise). As tight cycles contain spanning tight paths, this answers Gy\'arf\'as and S\'ark\"ozy's question asymptotically.
 
   Here, we solve the problem posed by Gy\'arf\'as and S\'ark\"ozy~\cite{GS13} exactly.
 
\begin{theorem}\label{2paths}
For any $n\in\mathbb N$ and  any $2$-colouring of the edges of the 3-uniform complete hypergraph $\mathcal K^{(3)}_n$, there are two monochromatic vertex-disjoint tight paths, of different colours, that together cover all the vertices.
\end{theorem}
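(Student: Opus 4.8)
The plan is to prove the statement by induction on $n$, reformulated extremally: among all ways of covering a subset of the vertices by a red tight path $R$ and a vertex-disjoint blue tight path $B$, I would fix one that covers as many vertices as possible (breaking ties, say, by maximising $|R|$). If this covers everything we are done, so suppose some vertex $v$ remains uncovered; the goal is then to derive a contradiction by enlarging the cover. This is the tight-path analogue of the classical Gerencs\'er--Gy\'arf\'as argument, in which a single uncovered vertex is absorbed either by extending one of the two paths or by a small local rerouting.

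Each tight path has two \emph{ends}, and the possibility of appending $v$ there is governed by an unordered pair of vertices: writing $R = r_1\dots r_a$ and $B = b_1\dots b_c$, I can append $v$ to the back of $R$ exactly when $\{r_{a-1}, r_a, v\}$ is red, to the front of $R$ when $\{r_1, r_2, v\}$ is red, and symmetrically for $B$ with blue. First I would try all four of these \emph{direct extensions}. If any one succeeds, the cover grows, contradicting maximality. Hence I may assume all four fail, which pins down the colours of four triples: the two red-end triples are blue and the two blue-end triples are red. In other words $v$ ``miscolours'' every end, and this strong structural information is the engine of the rest of the proof.

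The core of the argument is then a rerouting step that exploits this miscolouring. The idea mirrors the graph proof, where after the direct extensions fail one swaps an endpoint between the two paths using the single edge joining them; here, however, tight-path extension is rigid, since attaching a vertex consumes the colour of a \emph{triple} rather than of an edge. A typical move is to transfer the last vertex $r_a$ of $R$ onto an end of $B$ and then place $v$ at the freed end of $R$; this succeeds only if several auxiliary triples, such as $\{r_{a-2}, r_{a-1}, v\}$ and $\{b_{c-1}, b_c, r_a\}$, happen to have compatible colours. I would therefore branch on the colours of the triples formed by $v$ together with $r_a$ (and symmetrically with $b_c$) and the pairs sitting a bounded number of steps inside each path, and in each branch exhibit an explicit enlargement: extending, reversing a path, re-grafting a short terminal segment of one path onto the other, or merging the two paths through $v$.

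I expect this rerouting case analysis to be the main obstacle. Unlike the graph setting, no single swap works universally: because each reattachment costs a full triple, a successful move generally needs two or three auxiliary colours to cooperate simultaneously, so the cases do not close off as cleanly and one must look a constant distance into each path while keeping careful track of orientations. Degenerate configurations --- paths with fewer than two or three vertices, an empty path (where the ``different colours'' requirement is vacuous or forces a relabelling), and very small $n$ --- fall outside the generic analysis and must be handled separately, which is exactly why the statement permits short tight paths when $n\le 5$. To keep the analysis tractable I would reinforce the extremal choice with secondary tie-breaks (for instance, after maximising the number of covered vertices and then $|R|$, optimising some potential that measures how balanced the two ends are), so that the assumed maximal cover already carries enough forced colour structure near its four ends to rule out all but finitely many local patterns.
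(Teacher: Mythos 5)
Your proposal is a plan rather than a proof: the step you yourself identify as ``the main obstacle'' --- the rerouting case analysis after the four direct extensions fail --- is never carried out, and that case analysis is the entire mathematical content of the theorem. Worse, the setup you chose makes that analysis genuinely harder than it needs to be, and it is not clear it can be closed at all. With two \emph{disjoint} monochromatic paths $R$ and $B$, the failed extensions only pin down four triples (each end-pair together with $v$), while every triple mixing vertices of $R$ and $B$ --- precisely the triples any transfer or merging move must use --- remains completely unconstrained. A move such as ``shift $r_a$ onto an end of $B$ and put $v$ at the freed end of $R$'' needs two or three of these unconstrained colours to cooperate, and when they refuse, the replacement configurations they force involve yet more unconstrained triples; there is no a priori bound reducing this to ``finitely many local patterns,'' and your proposed tie-breaks ($|R|$, end-balance potentials) do not obviously supply one.

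The paper avoids this by working with a different extremal object: a single \emph{bicoloured tight path}, i.e.\ one tight path $v_1\dots v_k$ whose colour switches at most once, at a turning point $v_\ell$, chosen of maximum length. This joined structure is exactly what your disjoint-path formulation lacks (and the paper notes it is strictly stronger: two disjoint monochromatic paths of distinct colours cannot in general be concatenated into a bicoloured path). The junction buys two things your setup cannot replicate. First, an uncovered vertex $w$ can be \emph{inserted at the turning point} (consider $v_1\dots v_\ell v_{\ell+1} w v_{\ell+2}\dots v_k$): since the colour is allowed to switch once anywhere, such an insertion stays bicoloured no matter what colour one of the new edges gets, so each failed insertion pins down a single triple rather than requiring several colours to cooperate. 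Second, reversing the path moves the turning point from $v_\ell$ to $v_{\ell+1}$ and swaps the roles of the colours, giving a genuine WLOG that halves the analysis. With these two devices the paper needs only the colours of $v_\ell v_{\ell+1}w$, $v_{\ell+1}wv_{\ell+2}$, $v_1wv_{\ell+1}$, $v_{\ell+1}wv_k$ and finally $v_1wv_k$, and every branch ends in an explicit longer bicoloured path --- about a page in total. To salvage your approach you would either have to discover and verify the (possibly unbounded) case analysis you deferred, or switch the extremal object to the bicoloured tight path, at which point you would be reproducing the paper's proof.
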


It is not possible to improve Theorem~\ref{2paths} by replacing the tight paths with tight   cycles: Lo and Pfenninger~\cite{LoPf20} exhibit colourings of arbitrarily large $\mathcal K^{(r)}_n$, for  $k\ge 3$, which do not admit a partition into two tight cycles of different colours. However,  it has been announced
(see~\cite{LoPf20}) that a partition into  two tight monochromatic cycles, possibly of the same colour, does exist for the case $r=3$.

 Lo and Pfenninger~\cite{LoPf20} also show an analogue of the result from~\cite{BHS16} 
        for  $4$-uniform hypergraphs (i.e., they show that  for any $2$-edge-colouring of the  $4$-uniform complete graph $\mathcal K^{(4)}_n$,  two tight monochromatic cycles  of different colours partition almost all
the vertices), and indicate they believe their result should generalise to other uniformities. We believe that Theorem~\ref{2paths} should generalise to arbitrary uniformities as well:

\begin{conjecture}\label{2pathsconj}
For any $r, n\in\mathbb N$ and  any $2$-colouring of the edges of the $r$-uniform complete hypergraph $\mathcal K^{(r)}_n$, there are two monochromatic vertex-disjoint tight paths, of different colours, that together cover all the vertices.
\end{conjecture}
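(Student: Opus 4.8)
Since the uniformity $r$ is now a parameter, I would not attempt a reduction that decreases $r$: passing to the link of a vertex $x_0$ turns an $r$-set containing $x_0$ into an $(r-1)$-set, but tight paths in the link are not tight paths in $\mathcal K^{(r)}_n$, so the natural ``induction on $r$'' does not set up. Instead the plan is to run, for each fixed $r$, the same global exchange (extremal) argument that underlies Theorem~\ref{2paths}, so that $r=3$ serves only as a sanity check rather than as a genuine base case. Among all pairs $(P,Q)$ of vertex-disjoint tight paths with $P$ red and $Q$ blue I fix one maximising the number of covered vertices $|V(P)\cup V(Q)|$, breaking ties by maximising $|V(P)|$ and then by some fixed secondary rule. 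Writing $P=r_1\dots r_p$, $Q=b_1\dots b_q$ and letting $S$ be the uncovered set, the aim is to prove $S=\emptyset$.

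Assume $v\in S$. The first step is to read off the local colour constraints: the ``state'' of an $r$-tight path for the purpose of appending a vertex is recorded by its last $r-1$ vertices, so maximality forbids $\{r_{p-r+2},\dots,r_p,v\}$ from being red and hence forces it blue, and symmetrically the analogous end $r$-set at the front of $P$ is blue while the two end $r$-sets of $Q$ together with $v$ are red. The second step is the exchange move generalising the Gerencs\'er--Gy\'arf\'as swap: transfer a short block from the end of one path to the other and then append $v$. Here lies the essential new difficulty. In the graph case a transfer costs a single edge; for $r$-tight paths, sliding $j$ vertices from the end of $Q$ onto $P$ and then appending $v$ requires $j+1$ new $r$-sets to be simultaneously red, and consecutive such $r$-sets share $r-1$ vertices, so their colours are highly correlated and cannot be prescribed independently. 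The crux of the proof is therefore a robust high-uniformity exchange lemma guaranteeing that some bounded transfer-and-append succeeds whenever all direct extensions fail.

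To establish such a lemma I would combine the first-step constraints with a Ramsey/pigeonhole analysis of the colour ``profile'' of the $r$-sets spanned by $v$ (and, when $|S|\ge 2$, a second uncovered vertex $w$) together with the last few vertices of each path, and exploit rotations of the tight paths that re-expose a differently coloured end. The expectation is that the simultaneous failure of all such bounded moves pins down a rigid monochromatic pattern near the four endpoints, which can then be contradicted either by a longer rotation or by seeding a fresh short path inside $S$. I expect the bookkeeping to grow with $r$, and the main obstacle to be precisely the control of these overlapping, colour-correlated $r$-sets; it is conceivable that for large $r$ one needs a quantitatively stronger exchange statement than for $r=3$, proved by an auxiliary induction on the block length $j$.

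Finally, the extremal scheme degenerates when a path is empty or has fewer than $r$ vertices, since then the ``last $r-1$ vertices'' conditions become vacuous; these cases, together with the trivial ranges $r\ge n$ (at most one edge) and the classical case $r=2$ (Gerencs\'er--Gy\'arf\'as), I would treat directly, and I would anchor the remaining small values of $n$ by hand, using the convention that allows sets of size $1,\dots,r-1$ to count as tight paths. An alternative route for large $n$ only would bootstrap from an asymptotic partition into two tight cycles of different colours via absorption; but since the required asymptotic statement is established only for $r\le 4$ and merely conjectured in general, I would prefer to make the self-contained exchange argument work uniformly for all $r$ and $n$.
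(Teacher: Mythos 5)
The statement you are trying to prove is Conjecture~\ref{2pathsconj} of the paper: it is \emph{open}, and the paper offers no proof of it. The paper only establishes the case $r=3$ (Theorem~\ref{2paths}, via Proposition~\ref{bipath}), so there is no ``paper proof'' to match your argument against; your attempt must stand on its own as a resolution of an open problem, and it does not. The genuine gap is exactly where you place the ``crux'': the robust high-uniformity exchange lemma. Everything after your first step (the forced colours of the four end $r$-sets extended by $v$, which is indeed a correct consequence of maximality) is stated in the conditional --- ``the expectation is that\dots'', ``it is conceivable that\dots'' --- and no such lemma is formulated precisely, let alone proved. You have correctly diagnosed \emph{why} the problem is hard for general $r$ (a transfer of $j$ vertices forces $j+1$ overlapping $r$-sets, sharing $r-1$ vertices each, to be monochromatic simultaneously, and these colours cannot be prescribed independently), but diagnosing the obstruction is not the same as overcoming it. As written, the proposal is a research plan, not a proof.

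One further structural point is worth making, since it affects whether your plan is even the right starting scaffold. For $r=3$ the paper does \emph{not} take a maximum pair of vertex-disjoint monochromatic paths of different colours; it takes a single \emph{bicoloured} tight path of maximum length (Definition~\ref{bipa}), and all of its exchange moves pivot around the turning point $v_\ell$, inserting the outside vertex $w$ between $v_\ell$ and $v_{\ell+1}$ or rerouting through $v_1$ and $v_k$. The paper explicitly notes that Proposition~\ref{bipath} is strictly stronger than a pair-of-paths statement, because two disjoint monochromatic tight paths of distinct colours cannot in general be concatenated into a bicoloured path. Your extremal object (a maximum-coverage pair of disjoint paths, with tie-breaking) therefore discards the very feature --- the seam at the turning point, where one edge of each colour meets on $r-1$ shared vertices --- that makes the $r=3$ exchange argument close. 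If you pursue this programme, maximising a bicoloured tight path for general $r$ (an ordered vertex sequence whose consecutive $r$-sets change colour at most once) is the more faithful generalisation, and the four-endpoint analysis you sketch would then concentrate on the two path ends plus the turning window, where the forced-colour constraints are strongest.
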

        
In a variant of the original problem in graphs, the underlying complete graph is replaced with the complete balanced bipartite graph $K_{n,n}$. Pokrovskiy~\cite{P14} showed that these graphs can be partitioned into two monochromatic paths, unless the colouring is a {\it split colouring}, that is, a colouring where each colour induces the disjoint union of two complete bipartite graphs. (It is easy to see that if these complete bipartite graphs are  sufficiently unbalanced, a partition into two monochromatic paths becomes impossible. On the other hand,  a partition into three monochromatic paths is always possible in a split colouring.)
\begin{theorem}[Pokrovskiy~\cite{P14}]\label{Pokbip}
Any $2$-colouring of the edges of $K_{n,n}$ that is not a split colouring allows for a partition of $V(K_{n,n})$ into two monochromatic paths  of different colours.
\end{theorem}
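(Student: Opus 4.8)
The plan is to run an extremal ``longest paths'' argument in the spirit of Gerencs\'er--Gy\'arf\'as, arranging matters so that the split colouring emerges precisely as the configuration in which the argument gets stuck. Write the bipartition as $V(K_{n,n}) = A \cup B$. Among all pairs $(P,Q)$ of vertex-disjoint monochromatic paths with $P$ red and $Q$ blue, I would fix one maximising the number of covered vertices $|V(P)| + |V(Q)|$, breaking ties by a secondary quantity (for instance maximising $|V(P)|$, and then choosing convenient endpoints). If every vertex is covered we are done, so assume some vertex $v$, say $v \in A$, is left uncovered.

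First I would record the local consequences of maximality. Since $v$ lies in $A$, it is adjacent exactly to the vertices of $B$. If $p \in B$ is an endpoint of the red path $P$, then the edge $vp$ must be blue, since otherwise $P+v$ is a longer red path and the pair $(P+v, Q)$ covers one more vertex. Symmetrically, if $q \in B$ is an endpoint of $Q$, then $vq$ must be red. The classical argument now inspects the edge $pq$ and either appends $q$, then $v$, to $P$ in red, or appends $p$, then $v$, to $Q$ in blue. In the bipartite setting this step is obstructed by parity: whether the two endpoints of a path lie on the same side or on opposite sides of $A\cup B$ is governed by the parity of its length, so the edges $pq$, $vp$, $vq$ need not all be present, and a single pair of endpoints no longer suffices.

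To get around this I would enlarge the set of available endpoints by P\'osa-type rotations applied to each of $P$ and $Q$: a red rotation replaces an endpoint by a neighbouring vertex along a red chord, producing a new red path on the same vertex set with a different endpoint, and likewise for blue. Every endpoint in $B$ reachable by red rotations of $P$ must then send a blue edge to $v$, and every endpoint in $B$ reachable by blue rotations of $Q$ must send a red edge to $v$; in particular these two endpoint-sets are disjoint, since a vertex of $B$ cannot be joined to $v$ in both colours. Iterating the failed swap over all reachable endpoints, together with the extension moves that transfer a terminal segment of one path to the other, should show that whenever no covering pair exists the rotations cannot reach ``too many'' endpoints, which rigidly constrains the colours of all edges between $v$ and the covered part.

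The heart of the argument, and the step I expect to be the main obstacle, is converting this rigidity into the split structure. The aim is to show that the failure of every extension and rotation move forces the red and blue neighbourhoods of the uncovered vertices to split $A$ and $B$ into parts $A = A_1 \cup A_2$ and $B = B_1 \cup B_2$ with all red edges joining $A_1$ to $B_1$ or $A_2$ to $B_2$, and all blue edges joining $A_1$ to $B_2$ or $A_2$ to $B_1$, that is, exactly a split colouring, contradicting the hypothesis. Making this deduction precise requires a careful case analysis according to the lengths (parities) of $P$ and $Q$ and the sides on which their endpoints lie, and a separate treatment of degenerate cases where one path is empty, a single vertex, or very short. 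This bookkeeping, needed to rule out every non-split configuration, is where the real work lies.
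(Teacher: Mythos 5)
Your proposal is a plan rather than a proof, and the gap you yourself flag is the entire theorem. Everything up to the rotation step is the easy part (and is essentially the Gy\'arf\'as--Lehel argument, which only yields a cover of all but one vertex); the hard part is precisely ``converting this rigidity into the split structure,'' and for that you offer no mechanism, only the expectation that bookkeeping will close it. Worse, the rotation idea is unlikely to supply the needed rigidity: in a $2$-coloured $K_{n,n}$ a red rotation of $P$ requires a \emph{red} chord from an endpoint to an internal vertex, and nothing prevents all such chords from being blue, so the set of endpoints reachable by red rotations can consist of the single original endpoint. In that case your ``failed swap over all reachable endpoints'' constrains only the colours of a handful of edges at $v$ and at two or three endpoints, whereas the split structure you must force is a statement about the colour of \emph{every} edge of $K_{n,n}$. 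There is no visible route from the local constraints you collect to that global conclusion, and this is exactly where known proofs have to do something genuinely different.

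For contrast, the paper does not work with a maximal pair of paths at all. It first shows (Lemma~\ref{convert}) that the theorem is equivalent to finding a spanning cycle that is monochromatic or \emph{bicoloured}, then introduces \emph{good} cycles (bicoloured cycles whose two turning points lie in different partition classes, Definition~\ref{goodcycledef}), characterises non-split, non-degenerate colourings by the existence of a good $C_4$ (Lemma~\ref{charac}), and proves the key structural fact (Lemma~\ref{lem:goodnotsplit}): outside a \emph{longest good cycle} there is no $C_4$ with two edges of each colour, hence by Lemma~\ref{monopath} the outside is near-monochromatic and carries a spanning monochromatic path. A second extremal choice (among good cycles whose complement has a red spanning path, maximise the number of blue edges) then lets one absorb that path into the cycle. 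The crucial insight --- that one should analyse the leftover of a longest good cycle and find it almost monochromatic --- has no counterpart in your sketch, and it is what replaces the unbounded case analysis you correctly identify as ``where the real work lies.''
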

Gy\'arf\'as and Lehel~\cite{Gy83, GyLe73} proved  earlier that a partition of all but one vertex of~$K_{n,n}$ exists.  

We give a shorter proof of Pokrovskiy's result, Theorem~\ref{Pokbip}, and improve it to a partition into a path and a cycle, as follows.

\begin{theorem}\label{bip}
Any $2$-colouring  of the edges of $K_{n,n}$ that is not a split colouring allows for a partition of $V(K_{n,n})$ into a monochromatic path and a monochromatic cycle, of different colours.
\end{theorem}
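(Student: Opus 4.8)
The plan is to start from Pokrovskiy's partition and upgrade one of the two paths to a cycle. Apply Theorem~\ref{Pokbip} to obtain a partition of $V(K_{n,n})$ into a red path $R$ and a blue path $B$ of different colours, and write $X,Y$ for the two sides. Since each of $R,B$ alternates between $X$ and $Y$, the numbers of its vertices in $X$ and in $Y$ differ by at most one, and because $|X|=|Y|=n$ the two imbalances are exactly opposite. Hence either both $R$ and $B$ have their endpoints on opposite sides (so each has an even number of vertices, and is a candidate to be closed into a cycle), or both have their endpoints on the same side; in the latter situation $R$'s endpoints lie in one side and $B$'s in the other.

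First I would fix parity, reducing the second situation to the first by transferring a single vertex. Pick (WLOG) an endpoint $r\in X$ of $R$ and an endpoint $b\in Y$ of $B$; the edge $rb$ is present. If $rb$ is red I move $b$ from $B$ to $R$, prepending it at $r$; if $rb$ is blue I move $r$ from $R$ to $B$. A direct check shows that either move leaves both parts as monochromatic paths of the correct colours and makes both of them have their endpoints on opposite sides, so we reach the first situation. (Empty paths, and very small $n$, are handled separately using the conventions for short paths and cycles.)

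Now both $R$ and $B$ have an even number of vertices; let $r,r'$ be the endpoints of $R$ and $b,b'$ those of $B$, so that the edges $rr'$ and $bb'$ exist. If $rr'$ is red I close $R$ into a red cycle and keep $B$ as the blue path; if $bb'$ is blue I do the symmetric thing, and in either case the proof is finished. The only surviving configuration is the \emph{bad case}, in which $rr'$ is blue and $bb'$ is red simultaneously.

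The bad case is the heart of the matter, and I expect it to be the main obstacle. Here the plan is to reroute an endpoint by monochromatic rotations: regarding $R$ as a Hamiltonian red path of the red graph on $V(R)$, I would rotate the endpoint $r'$ along red edges to arrive at a new endpoint $y$ with $ry$ red, closing a red cycle on $V(R)$, and symmetrically for $B$. To guarantee that some such rotation succeeds I would invoke the hypothesis that the colouring is not a split colouring, which should supply enough connectivity in the relevant colour class for the rotation–closure to go through; conversely, I anticipate proving that if every rotation stalls, so that the bad case genuinely resists all local moves, then the red and blue graphs each break into two complete bipartite pieces, i.e.\ the colouring is split, against our assumption. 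Making this dichotomy precise, and arranging the moves so that the vertex transfer of the second paragraph and the rotations of this one do not cycle, is where the real work lies; to control the stalled configurations I would also choose the initial partition extremally, for instance taking $R$ to be a longest red path so that its endpoints send only blue edges to $V(K_{n,n})\setminus V(R)$.
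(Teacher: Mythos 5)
Your first three steps are sound and essentially reproduce the paper's Lemma~\ref{convert}: the parity analysis of how the two paths meet the two sides, the single-vertex transfer, and closing a path when its end-edge has the right colour all work (modulo the degenerate cases you set aside). But the proof stops exactly where the difficulty starts. The \emph{bad case} --- $rr'$ blue and $bb'$ red --- is not resolved; what you offer there is a plan (P\'osa-style rotations plus a claimed dichotomy ``if all rotations stall, the colouring is split'') with no proof of either half. That dichotomy is far from obvious and, as stated, dubious: the rotation endgame depends on degree conditions inside one colour class restricted to $V(R)$, which the hypothesis gives you no control over, and the non-splitness of the colouring has in effect already been spent in invoking Theorem~\ref{Pokbip}. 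You also acknowledge, correctly, that the interaction between the vertex transfer and the rotations could cycle; preventing that is additional unaddressed work. So this is a genuine gap, and it sits at what you yourself call the heart of the matter.

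The paper's resolution of this very case uses a different and much cleaner idea, which your setup is one step away from: instead of trying to close $R$ or $B$ separately, join them end-to-end into a single spanning \emph{bicoloured} cycle (your parity analysis shows the four endpoints can always be matched across the two sides), and then choose, among \emph{all} spanning bicoloured cycles, one maximising the number of red edges. Writing the chosen cycle as $v_1v_2\ldots v_kv_1$ with turning points $v_1,v_\ell$, either the turning points lie in distinct classes --- then the chord $v_1v_\ell$ splits the cycle into a monochromatic cycle and a monochromatic path of the other colour --- or they lie in the same class, in which case the chord $v_1v_{\ell+1}$ exists: if it were red one could rebuild a bicoloured spanning cycle with strictly more red edges (contradicting maximality), so it is blue and yields the blue cycle $v_1v_{\ell+1}\ldots v_kv_1$ together with the red path $v_2\ldots v_\ell$. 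No rotations, no connectivity argument, and no further appeal to non-splitness are needed; the extremal choice does all the work that your rotation scheme was supposed to do.
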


It is not possible to obtain a partition into two cycles, or to choose the colour of the cycle in Theorem~\ref{bip}. This can be seen by considering a split colouring of $K_{n,n}$ with colours blue and red, and then recolouring in blue one of the red edges. If the sizes of the classes of the split colouring are  chosen sufficiently different, then no partition into two monochromatic cyles exists. Moreover, given any partition into a monochromatic path and a monochromatic cycle, it is not difficult to see that the monochromatic path
  has to use the recoloured edge, and is therefore blue. However, our proof will show that in certain situations it is possible to choose the colour of the cycle (see Corollary~\ref{coro13}).

Finally, we return to   $r$-uniform hypergraphs, now for arbitrary uniformities $r$. Having studied the problem for the graphs  $K_n$ and $K_{n,n}$, and for the hypergraph~$\mathcal K^{(r)}_n$, it seems natural to consider a multipartite
 hypergraph variant as well. We propose to   replace the underlying complete  $r$-uniform  hypergraph with $\mathcal K^{(r)}_{r\times n}$, the complete balanced $r$-partite $r$-uniform   hypergraph with $n$ vertices in each partition class. 
It is not clear whether there is a function $f(r)$ such that for any 2-colouring of the edges of  this hypergraph there is a partition into $f(r)$  monochromatic tight paths. But  if such a function $f(r)$ exists, then $f(r)\ge r+1$, as the following result shows. 

\begin{proposition}\label{example}
For all $n,r\in\mathbb N$ with $n\ge 3^{r+2}$ and $r\ge 2$, one can colour the edges of~$\mathcal K^{(r)}_{r\times n}$ with two colours so that  $V(\mathcal K^{(r)}_{r\times n})$ cannot be partitioned into less than $r+1$ monochromatic tight paths.
\end{proposition}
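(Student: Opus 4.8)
The plan is to use a colouring that generalises the split colourings of the bipartite case $r=2$ (Theorem~\ref{Pokbip}). Write the classes as $V_1,\dots,V_r$, split each $V_i$ into two parts $A_i,B_i$ of sizes $a_i$ and $n-a_i$ (to be fixed later), and colour an edge red if it meets an even number of the sets $B_1,\dots,B_r$, and blue otherwise; for $r=2$ this is exactly a split colouring. The first step is a structural observation: in this colouring every monochromatic tight path has a constant \emph{type profile}. Indeed, if $v_1,v_2,\dots$ is a tight path, then two consecutive edges (windows of $r$ consecutive vertices) differ only in that $v_j$ is replaced by $v_{j+r}$, and these two vertices lie in the same class; for the two windows to receive the same colour, the parity of the number of $B$-vertices must be preserved, which forces $v_j$ and $v_{j+r}$ onto the same side ($A$ or $B$) of their class. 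Hence along the path each class contributes vertices of a single type, so the path lies in one ``box'' $\prod_i T_i$ with $T_i\in\{A_i,B_i\}$, and is encoded by a profile $u\in\{0,1\}^r$ recording the classes it uses on the $A$-side.

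Next I would quantify how such paths can cover the ground set. A tight path visits the $r$ classes cyclically, so the numbers of its vertices in the various classes pairwise differ by at most one; let $m_k$ denote the smallest of these numbers for the $k$-th path. Assume $V(\mathcal K^{(r)}_{r\times n})$ is partitioned into $p$ monochromatic tight paths with profiles $u^{(1)},\dots,u^{(p)}$ and parameters $m_1,\dots,m_p$. Counting the $A$-side vertices of class $i$, and then all vertices of class $i$, gives
\[
a_i=\sum_{k:\,u^{(k)}_i=1} m_k+\theta_i,\qquad 0\le\theta_i\le p,\qquad \sum_{k} m_k\in[\,n-p,\;n\,].
\]
Dividing the first relation by $n$ and using the second exhibits $a/n$ as a point at $\ell_\infty$-distance $O(p/n)$ from the convex combination $\sum_k (m_k/\sum_j m_j)\,u^{(k)}$; that is, $a/n$ lies within distance $O(p/n)$ of the convex hull of some $p$ vertices of the cube $\{0,1\}^r$. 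Note that this argument ignores the colours and treats red and blue paths uniformly, which is what we want, since the Proposition allows the $r+1$ paths to repeat colours.

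The final step is to choose the sizes $a_i$ so that this conclusion is impossible whenever $p\le r$. By Carath\'eodory's theorem every point of $[0,1]^r$ is a convex combination of at most $r+1$ vertices of the cube, and the points expressible with only $r$ vertices lie in the union $F$ of the convex hulls of the $r$-subsets of $\{0,1\}^r$, each of which has dimension at most $r-1$. Since $F$ is a finite union of sets of dimension strictly less than $r$, its complement is open and dense, so there is a point $a/n$ at positive distance from $F$; once $n$ is large the slack $O(r/n)$ becomes smaller than this distance, forcing $p\ge r+1$. Concretely I would take the $a_i$ to form a rapidly decreasing (super-increasing) sequence, for instance built from powers of a fixed base, so that $a/n$ is provably bounded away from every such flat.

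The main obstacle is exactly this quantitative geometric estimate. Being merely off every hyperplane spanned by $r$ points of $\{0,1\}^r$ does not suffice, because such a hyperplane can have a large integer normal and $a/n$ can sit very close to it while still being far from $F$; one must instead bound the distance from $a/n$ to the bounded simplices themselves, uniformly over the $\binom{2^r}{r}$ of them. It is in optimising this distance against the $O(r)$ rounding error accumulated over the at most $r$ paths that the hypothesis $n\ge 3^{r+2}$ is needed, and verifying the distance bound for the explicit super-increasing choice of the $a_i$ is the one genuinely technical point of the proof.
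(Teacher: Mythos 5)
Your colouring and your opening structural observation are exactly those of the paper (the generalised split colouring, and the parity argument showing each monochromatic tight path meets only one side of each class, the paper's~\eqref{dovescry}), and your per-class counting with the quantities $m_k$ and slack $O(p/n)$ is correct. But the decisive step --- exhibiting a concrete choice of the $a_i$ and proving that no $p\le r$ profiles can account for it --- is precisely the step you leave unproven, and you say so yourself. This is not a routine verification you can defer: the density/dimension argument gives a point far from the union $F$ of simplices only with an uncontrolled distance, so it yields no bound of the form $n\ge 3^{r+2}$; a uniform volume estimate over the $\binom{2^r}{r}$ simplices would force $n$ of order roughly $2^{r^2}$. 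Worse, your concrete suggestion (powers of a fixed base) is actually \emph{false} inside your own relaxation: with $a_i=3^i$, the point $a/n$ lies within $\ell_\infty$-distance $3/n\le 2r/n$ of the convex hull of the $r$ cube vertices $\mathbf 0, e_2,\ldots,e_r$ (match coordinates $2,\ldots,r$ exactly; coordinate $1$ costs $3/n$, which your slack absorbs). So your criterion cannot rule out $p=r$ for any $r\ge2$ with this choice. The culprit is that passing to convex combinations destroys the one-sided nature of the exact count: combinatorially, if no path meets $A_1$ then $a_1=0$, whereas your relaxed constraint only says $a_1/n$ is within $2p/n$ of $0$, letting the slack pay for the whole of $A_1$.

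The paper closes exactly this gap with a short induction on the exact counts, never passing to convex combinations: a path meeting $V_j^1$ avoids $V_j^2$, hence has at most $3^j+1$ vertices in \emph{any} class; so the paths meeting $V_1^1\cup\ldots\cup V_{i-1}^1$ cover at most $\sum_{j<i}3^j+(i-1)<3^i$ vertices of class $i$, and each class $i$ recruits a path not seen before (this is~\eqref{rollerskatejams}). That gives $r$ paths, and these cover at most $\sum_{j\le r}3^j+r< n-3^r=|V_r^2|$ vertices of class $r$, forcing an $(r+1)$-st path. If you insist on your geometric formulation, you would have to run this same induction inside it, and because of your $2r$-slack you must replace powers of $3$ by a sequence satisfying $a_i>\sum_{j<i}a_j+2ri$ (such a sequence, e.g.\ defined by that recursion, still keeps $a_r$ plus all errors below $3^{r+2}$). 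As written, however, the heart of the proof is missing.
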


Observe that for $r=2$, the bound from Proposition~\ref{example} coincides with the  bound coming from split colourings, which is 3. To prove the proposition, we introduce a generalisation of the split colouring to hypergraphs.


The rest of the paper is organised as follows. Section~\ref{tight} contains the proof of Theorem~\ref{2paths}, showing the slightly stronger Proposition~\ref{bipath}. Section~3 is dedicated to $r$-partite $r$-uniform hypergraphs and split colourings, and Proposition~\ref{example} is proved. In Section~4, we give the brief proof of Theorem~\ref{bip}, based on 
Theorem~\ref{Pokbip};
 then offer an alternative shorter proof of Theorem~\ref{Pokbip}; and finally present a quick application of Theorem~\ref{bip} to $3$-edge-colourings of complete and complete bipartite graphs.

\section{Two tight paths for $\mathcal K^{(3)}_n$}\label{tight}

In this section we show Theorem~\ref{2paths}, thus answering the question of Gy\'arf\'as and S\'ark\"ozy from~\cite{GS13}. Similarly as in the proof for graphs from~\cite{GG67}, the starting point for our proof is  a path, in our case a tight path, that switches colours at most once and has maximal length with this property. Let us define this object in precise terms.

\begin{definition}[Tight bicoloured path]\label{bipa}
Given  any $2$-colouring of the edges of the 3-uniform complete hypergraph~$\mathcal K^{(3)}_n$,  we 
call a tight path $P=v_1v_2\ldots v_k$ {\em bicoloured} if there is an index $\ell\in\{1,\ldots,k-1\}$ such that each edge $v_{i-1}v_{i}v_{i+1}$ with $2\le i\le\ell$ has the same colour, and all other edges of $P$ have the other colour. Call $v_\ell$ the {\em turning point} of $P$. 
\end{definition}
Note that  monochromatic paths are allowed in Definition~\ref{bipa}: namely, a bicoloured path with turning point $v_\ell$ is monochromatic if and only if $\ell=1$ or  $\ell=k-1$. As another example, if we have two differently coloured edges $v_1v_2v_3$ and $v_2v_3v_4$, then they form a bicoloured path $v_1v_2v_3v_4$ with turning point $v_2$.

Clearly, every bicoloured tight path $P$ contains two disjoint monochromatic paths $P_1$, $P_2$ of distinct colours (and moreover, if $P$ has at least $6$ vertices, then each of $P_1$, $P_2$ is either empty or has at least an edge). So in order to prove Theorem~\ref{2paths}, it suffices to show the following proposition.

\begin{proposition}\label{bipath}
For any $n\in\mathbb N$ and  any colouring of the edges of the 3-uniform complete hypergraph $\mathcal K^{(3)}_n$, there is a spanning bicolored tight path.
\end{proposition}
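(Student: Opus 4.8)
The plan is to prove Proposition~\ref{bipath} by an extremal (longest-path) argument, mirroring the graph case of Gerencs\'er and Gy\'arf\'as. Among all bicoloured tight paths in $\mathcal K^{(3)}_n$, I would fix one, say $P=v_1v_2\ldots v_k$, of maximum length (maximum number of vertices). If $k=n$ we are done, so suppose for contradiction that some vertex $w\notin V(P)$ exists. The goal is to show that $w$ can always be absorbed into $P$ while preserving the bicoloured property, contradicting maximality. Write the turning point as $v_\ell$, so the edges $v_{i-1}v_iv_{i+1}$ for $2\le i\le\ell$ all have one colour (say red) and the remaining edges all have the other colour (blue).

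\emph{Extending at the ends.} The most direct way to grow $P$ is to append $w$ at either end. Adding $w$ in front of $v_1$ turns the edge $wv_1v_2$ into the new leading edge; for the result to stay bicoloured this edge must be red (matching the red block), unless the red block is empty and we are free to use either colour. Symmetrically, appending $w$ after $v_k$ requires $v_{k-1}v_kw$ to be blue. So if neither simple extension works, we must have that $wv_1v_2$ is blue \emph{and} $v_{k-1}v_kw$ is red. The key idea is that this colour information, rather than being an obstruction, lets us re-split the path: a ``wrong-coloured'' extension edge at one end can be used to create a \emph{new} turning point. For instance, if $wv_1v_2$ is blue, then $wv_1v_2\ldots$ need not match the old red block at the front, but we may instead try to flip the orientation or relocate the turning point so that the blue edge becomes part of the blue block and a new turning point is chosen at $v_1$.

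\emph{Absorbing at the turning point, and the main case analysis.} When end-extensions fail, the heart of the argument is to insert $w$ near the turning point $v_\ell$, where the path is already changing colour and there is the most flexibility. I would examine the colours of the edges among $w$ and the vertices $v_{\ell-1},v_\ell,v_{\ell+1},v_{\ell+2}$ (the tight edges straddling the turn), and show that in every colour pattern one can reroute the path through $w$ — either by inserting $w$ between two consecutive $v_i$'s, or by cutting $P$ at the turn, reattaching $w$, and declaring a (possibly new) turning point. Because a tight edge involves three vertices, each local modification must be checked against the tightness condition (every three consecutive vertices form an edge) as well as the single-turning-point condition; this bookkeeping is what makes the hypergraph case genuinely harder than the graph case.

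\emph{Main obstacle.} I expect the principal difficulty to be exactly this tightness constraint: unlike in graphs, where a single wrong-coloured edge at an endpoint immediately yields the two desired monochromatic paths (and one does not even need a turning point), here inserting or reattaching a vertex disturbs \emph{two} consecutive tight edges at once, so a naive swap can destroy tightness or create a second turning point. Controlling this will require a careful, complete case distinction on the colours of the few edges near each end and near $v_\ell$, and very possibly a handful of special small-$n$ base cases (the excerpt's remark that sets of size $1,\ldots,r-1$ count as paths only when $n\le 5$ strongly suggests that $n\le 5$ must be treated separately). A secondary subtlety is the boundary behaviour when the red or blue block of $P$ is empty or nearly empty, where the turning point is at an end and extra freedom in choosing the colour of a new edge must be exploited rather than overlooked.
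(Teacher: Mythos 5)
Your setup coincides exactly with the paper's: take a longest bicoloured tight path $P=v_1\ldots v_k$ with turning point $v_\ell$, note that a vertex $w$ outside $P$ cannot be appended at either end, and deduce that both colour blocks are nonempty (so $1<\ell<k-1$). The problem is that everything after this point in your proposal is a declaration of intent rather than an argument: the statement ``show that in every colour pattern one can reroute the path through $w$'' \emph{is} the theorem, and it is precisely the part you do not carry out. Worse, the scope you propose for the case analysis --- the edges among $w$ and $v_{\ell-1},v_\ell,v_{\ell+1},v_{\ell+2}$, i.e.\ local insertions and re-splits at the turn --- is too narrow to ever produce a contradiction. Concretely: by symmetry one may assume $\{v_\ell,v_{\ell+1},w\}$ is red; maximality then forces $\{v_{\ell+1},w,v_{\ell+2}\}$ to be blue (otherwise insert $w$ after $v_{\ell+1}$); but after that, every insertion of $w$ between consecutive vertices near the turn is blocked whenever the remaining local edges ($\{v_{\ell-1},v_\ell,w\}$, $\{w,v_{\ell+2},v_{\ell+3}\}$, etc.) carry the ``wrong'' colours, and nothing prevents the adversary from colouring them that way. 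No purely local move closes the argument.

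The idea you are missing is that $w$ must be played against the \emph{endpoints} of $P$ in combination with the turn. The paper's proof pins down the colours of $\{v_1,w,v_{\ell+1}\}$ and $\{v_{\ell+1},w,v_k\}$ by considering globally rearranged paths in which whole segments are reversed and $w$ acts as a bridge: $v_\ell v_{\ell-1}\ldots v_1\,w\,v_{\ell+1}\ldots v_k$ (red block reversed, $w$ spliced between $v_1$ and $v_{\ell+1}$) and $v_1\ldots v_{\ell+1}\,w\,v_k v_{k-1}\ldots v_{\ell+2}$ (blue block traversed backwards). The final contradiction then comes from the edge $\{v_1,w,v_k\}$: whichever colour it has, one of the two paths $v_2\ldots v_{\ell+1}\,w\,v_1 v_k\ldots v_{\ell+2}$ or $v_\ell\ldots v_1 v_k\,w\,v_{\ell+1}\ldots v_{k-1}$ is a longer bicoloured tight path. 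These reorderings --- starting the path at $v_2$ or $v_\ell$, jumping via $w$ to an endpoint, then running a reversed segment --- are not ``insertions'' in any local sense, and without them (or something equally global) your case analysis cannot terminate. As a side remark, your concern about special small-$n$ base cases is unnecessary: the paper's argument is completely uniform in $n$ and needs no such treatment.
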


Note that Proposition~\ref{bipath} is slightly stronger than Theorem~\ref{2paths} in the sense that two monochromatic tight paths of distinct colours can not always be concatenated to a bicoloured path.

\begin{proof}[Proof of Proposition~\ref{bipath}]

We take  a longest bicoloured tight path $P$ in $\mathcal K^{(3)}_n$. Say $P=v_1v_2\ldots v_k$, with $v_1v_2v_3$ red, and with  turning point $v_{\ell}$ (for some $\ell\in\{1,\ldots,k-1\}$). 

If $k=n$, we are done, so we assume  $k<n$. Then there is a vertex $w$ which is not on~$P$. Because of the maximality of $P$, we know that $w$ cannot be added to $P$ via the edge $wv_1v_2$ or via the edge $v_{k-1}v_kw$. Therefore, $P$ is not monochromatic and thus $$1<\ell<k-1.$$ 

Note that reversing $P$  makes $v_{\ell +1}$ the turning point of the reversed bicoloured tight path.
This means that  we can  invoke colour symmetry to be able to assume that 
\begin{equation}\label{2}
\text{the edge $v_\ell v_{\ell+1} w$ is red. }
\end{equation}

We claim that 
\begin{equation}\label{3}
\text{$v_{\ell+1} w v_{\ell+2}$ is blue.}
\end{equation}

In order to see~\eqref{3} by contradiction, assume  that
 $v_{\ell+1} w v_{\ell+2}$ is red.
Consider the tight path $P':=v_1v_2\ldots v_\ell v_{\ell+1} w v_{\ell+2} 
\ldots  v_{k-1}v_k$. If  $\ell=k-2$, then by~\eqref{2},  $P'$ is monochromatic (and thus bicoloured). If $\ell<k-2$, then, because of~\eqref{2}, no matter which colour the edge $w v_{\ell+2} v_{\ell+3}$ has,  $P'$ is a  bicoloured tight path. It is also longer than $P$. This is a contradiction, as $P$ was chosen as a maximum length bicoloured tight path. Hence we proved~\eqref{3}.

Next, we claim that 
\begin{equation}\label{4}
\text{$v_1wv_{\ell +1}$ is red.}
\end{equation}

 Indeed, in order to see~\eqref{4}, it suffices to consider the tight path $$P'':=v_\ell v_{\ell-1}\ldots v_2v_1wv_{\ell+1}v_{\ell+2}\ldots v_k.$$ If the edge $v_1wv_{\ell +1}$ was blue, then by~\eqref{3}, and no matter which colour the edge $v_2v_1w$ has,  $P''$ is a bicoloured tight path on $V(P)\cup\{w\}$. This contradicts the maximality of $P$. Thus we proved~\eqref{4}.
 
 Similarly, by considering the tight path $v_1 v_{2}\ldots v_{\ell}v_{\ell+1}wv_kv_{k-1}\ldots v_{\ell+2}$, which cannot be bicoloured, we can use~\eqref{2} to see that 
 \begin{equation}\label{5}
\text{the edge $v_{\ell+1}wv_k$ is blue.}
\end{equation}

Now, consider the edge $v_1 w v_k$. If this edge is red, then because of~\eqref{2} and~\eqref{4}, the tight path
$$v_2v_3\ldots v_\ell v_{\ell+1}wv_1v_kv_{k-1}v_{k-2}\ldots v_{\ell +2}$$ is bicoloured (no matter which colour the edge $v_1v_kv_{k-1}$ is). Similarly, if $v_1 w v_k$ is blue, then by~\eqref{5} and~\eqref{3},  
$$v_{\ell}v_{\ell-1}\ldots v_{2}v_{1}v_kwv_{\ell+1}v_{\ell +2}\ldots v_{k-2}v_{k-1}$$
  is a bicoloured tight path no matter the colour of $v_2v_1v_k$. In either case, we obtain a contradiction to the maximality of the bicoloured tight path~$P$, which concludes the proof.
\end{proof}

\section{Split colourings}
\subsection{Split colourings in hypergraphs}
In this section, we consider $2$-colourings of the edges of the $r$-partite $r$-uniform hypergraph with $n$ vertices in each partition class, denoted by~$\mathcal K^{(r)}_{r\times n}$. 
For graphs, the following definition coincides with the usual definition of split colourings.

\begin{definition}[Hypergraph split colourings]
Let $V_1, V_2, \ldots, V_r$ be the partition classes of $\mathcal K^{(r)}_{r\times n}$. For each $i$, partition $V_i$ into two non-empty  sets $V_i^1$, $V_i^2$. 
Colour  each edge having an even number of vertices in $\bigcup_{1\le i\le r} V_i^{1}$ in red, and colour all other edges blue. Any colouring obtained in this way is called a {\em split colouring}.
\end{definition}

We now give the proof of Proposition~\ref{example}.
\begin{proof}[Proof of Proposition~\ref{example}]
Consider a split colouring of $\mathcal K^{(r)}_{r\times n}$ with classes 
$V^1_1, V_1^2, V^1_2$, $V^2_2, \ldots, V^1_r, V^2_r$, where for $1\le i\le r$,  the class $V^1_i$ has size $3^{i}$, and the class $V^2_i$ has size $n-3^{i}$.
Let $\mathcal P$ be a set of disjoint monochromatic tight paths that cover all the vertices. 

Observe that because of  the structure of the split colouring, each  path in $\mathcal P$ can only meet at most one of $V^1_i$, $V^2_i$, for each $1\le i\le r$, and so,
\begin{equation}\label{dovescry}
\text{for each $P\in\mathcal P$ and each $i\in [r]$, one of $V^1_i\cap V(P)$, $V^2_i\cap V(P)$ is empty.}
\end{equation}
Our aim is to show that $|\mathcal P|\ge r+1$.
For this, we let  $\mathcal P_i$ denote the set of all paths in $\mathcal P$ that meet $V^1_1\cup\ldots\cup V^1_i$ (in particular $\mathcal P_0=\emptyset$), set $V_0:=\emptyset$, and use induction on $i$ to show that for $0\le i\le\ell\le r$,
\begin{equation}\label{rollerskatejams}
\text{at most $\textstyle\sum_{j=1}^i|V_j^1|+i$ vertices in $V_\ell$ are covered by paths from $\mathcal P_i$.}
\end{equation}
This is trivially true for $i=0$ and all $0\le \ell\le r$. For $i\ge 1$, because of~\eqref{dovescry}, the paths in $\mathcal P_i\setminus \mathcal P_{i-1}$ cannot meet $V_i^2$, and therefore,  by induction, 
 the number of vertices in $V_\ell$ covered by paths from $\mathcal P_i$ 
is at most
$|V_i^1|+1+\sum_{j=1}^{i-1}|V_j^1|+i-1=\sum_{j=1}^i|V_j^1|+i$. This proves~\eqref{rollerskatejams}.

Now, observe that for all $1\le i\le r$, $$|V_i^1|= 3^{i}>\sum_{j=1}^{i-1}3^j+i-1=\sum_{j=1}^{i-1}|V_j^1|+i-1.$$ So, by~\eqref{rollerskatejams} for $\ell=i$,  we know that $\mathcal P_i\setminus \mathcal P_{i-1}\neq\emptyset$ for all $1\le i\le r$. In particular, $|\mathcal P_r|\ge r$.

Moreover, by~\eqref{rollerskatejams} for $\ell =i=r$,  the paths from $\mathcal P_r$
cover at most  $$\sum_{j=1}^{r}|V_j^1|+r= \sum_{j=1}^{r}3^j+r\le 3^{r+2}-3^r-1\le n-3^r -1= |V_r^2|-1$$ vertices  in $V_r$, where we used the fact that by assumption,  $n\ge 3^{r+2}$. As~$V_r^2\subseteq V_r$ has to be covered by paths from~$\mathcal P$, we deduce that $|\mathcal P|\ge |\mathcal P_r|+1\ge  r+1$, which is as desired.
\end{proof}

\subsection{Split colourings in graphs}\label{sec3.2}
For graphs, it is known that split  colourings can be characterised by the existence of certain substructures. Pokrovskiy~\cite{P14} showed that
 a $2$-colouring of $K_{n,n}$ is split if and only if all vertices see both colours and there is no spanning monochromatic component. 
 In Lemma~\ref{charac} below, we give a new characterization of colourings that are not split colourings or one of their close relatives: monochromatic colourings or  {\it V-colourings}. The latter class is defined now:
 
 \begin{definition}[V-colouring]\label{Vcoldef} Call a colouring of the edges of $K_{n,n}$ a {\it V-colouring} if it has the property that each of the colours spans a complete bipartite graph.
  \end{definition}
Note that the set of all vertices seeing both colours coincides with one of the partition classes of $K_{n,n}$, giving the colouring the form of the letter V.
 
For the characterisation in  Lemma~\ref{charac}, the following notation  will be convenient. 

\begin{definition}[Bicoloured cycle, good cycle]\label{goodcycledef}
In a $2$-edge-coloured $K_{n,n}$, call a cycle $C=v_1v_2\ldots v_kv_1$ 
 {\em bicoloured} if for some $\ell\in\{2,\ldots, k\}$  each edge $v_{i}v_{i+1}$ with $1\le i\le\ell-1$ has one colour, and all other edges have the other colour. Call
 $v_1$ and $v_\ell$ the {\em turning points of $C$}. If  $v_1$ and $v_\ell$ lie in distinct partition classes, call $C$ {\em good}. 
 \end{definition}
 Note that, in contrast with Definition~\ref{bipa},  monochromatic cycles 
  are  {not} bicoloured.

\begin{lemma}\label{charac}
Given a colouring $c$ of the edges of $K_{n,n}$, the following are equivalent:
\begin{enumerate}[(a)]
\item $c$ is either  a split colouring, or a V-colouring, or monochromatic;
\item there are no  good cycles for $c$;
\item there is no  good $C_4$ for $c$.
\end{enumerate}
\end{lemma}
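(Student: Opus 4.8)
The plan is to prove Lemma~\ref{charac} by establishing the cycle of implications $(a)\Rightarrow(b)\Rightarrow(c)\Rightarrow(a)$. The direction $(b)\Rightarrow(c)$ is trivial, since a good $C_4$ is a special case of a good cycle, so its nonexistence follows immediately from the nonexistence of all good cycles. The real content lies in the other two implications.

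For $(a)\Rightarrow(b)$, I would argue that in each of the three listed colouring types, no bicoloured cycle can have its two turning points in distinct partition classes. Consider a bicoloured cycle $C=v_1v_2\ldots v_kv_1$ with turning points $v_1$ and $v_\ell$; since $K_{n,n}$ is bipartite, $k$ is even and consecutive vertices alternate between the two classes. The two monochromatic arcs of $C$ have lengths $\ell-1$ and $k-\ell+1$. For a V-colouring, every edge incident to a fixed class, say the one not containing the "both-colour" vertices, has a determined colour, which forces the colour to change only at vertices of one specific class, so both turning points must lie there. For a split colouring, I would use that each colour class is a disjoint union of two complete bipartite graphs induced by the partition $V_i=V_i^1\cup V_i^2$; a walk stays within a fixed colour precisely while it remains inside one such complete bipartite block, and I would examine the parity of the positions at which a cycle can cross between blocks to conclude the turning points fall in the same class. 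The monochromatic case is immediate, as a monochromatic cycle has no turning points at all. The main subtlety here is the careful bookkeeping of which class a colour-change is allowed to occur in, given the structure of the colour classes.

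For the crucial implication $(c)\Rightarrow(a)$, I would prove the contrapositive: if $c$ is none of the three types, then a good $C_4$ exists. So assume $c$ is not monochromatic, not a V-colouring, and not a split colouring. Using Pokrovskiy's characterisation stated in the text---that a colouring is split iff every vertex sees both colours and there is no spanning monochromatic component---I would split into cases. If some vertex sees only one colour, or more generally if the colouring fails to be a V-colouring, I would locate two incident edges of different colours sharing a vertex and then extend them, using the assumption that we are not in a V-colouring or monochromatic situation, to find a second colour-change "at the right place", namely in the opposite class, producing a $C_4$ whose two turning points lie in distinct classes. If instead every vertex sees both colours, then by Pokrovskiy's characterisation the failure to be split means there \emph{is} a spanning monochromatic component; I would use this connectivity together with the presence of both colours at each vertex to build a good $C_4$ directly. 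Concretely, a $C_4$ $xaybx$ is good exactly when the two colour changes occur at one vertex of each class, e.g.\ $xa,ay$ same colour and $yb,bx$ the other, with $a$ in one class and the turning data forcing turning points in distinct classes; I would hunt for four vertices realising this pattern.

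I expect the main obstacle to be the $(c)\Rightarrow(a)$ direction, specifically organising the case analysis so that in each non-exceptional colouring one can reliably locate a $C_4$ whose colour changes happen in opposite partition classes (the "goodness" condition), rather than merely a $C_4$ that uses both colours. The delicate point is that a bicoloured $C_4$ always exists once both colours are present, but arranging the two turning points to lie in different classes is exactly what the forbidden structures (split, V, monochromatic) obstruct; so the argument must extract, from the failure of each structural condition, precisely the freedom to place a colour change in the "other" class. Leaning on Pokrovskiy's component-based characterisation should give the cleanest handle on this, letting me convert the negation of "split" into a usable combinatorial statement about monochromatic components and vertices seeing both colours.
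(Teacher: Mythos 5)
Your outline of the equivalence structure is fine, and your sketch of $(a)\Rightarrow(b)$ (which the paper dismisses as straightforward) can indeed be made rigorous: in a split colouring the red arc and the blue arc of a bicoloured cycle each stay inside a single monochromatic complete bipartite block, so both turning points lie in the intersection of the vertex sets of one red block and one blue block, which is contained in a single partition class; in a V-colouring both turning points must lie in the class whose vertices see both colours. The genuine gap is in $(c)\Rightarrow(a)$, the only direction with real content, and your proposal does not prove it. What you offer there is a case split, driven by Pokrovskiy's component characterisation of split colourings, in which every case ends with ``I would hunt for four vertices realising this pattern'' or ``find a second colour-change at the right place''. That is a restatement of the goal, not an argument: as you yourself observe, a bicoloured $C_4$ is easy to find once both colours are present; the entire difficulty is forcing the two turning points into \emph{distinct} classes, and your plan supplies no mechanism for doing so. In particular, knowing that a spanning monochromatic component exists (the negation of one half of Pokrovskiy's condition) does not by itself hand you a good $C_4$; some structural analysis of how vertices attach to a fixed pair of colour neighbourhoods is unavoidable, and that analysis is exactly what is missing.

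For comparison, the paper's proof of $(c)\Rightarrow(a)$ is direct and needs no external input. First note that a good $C_4$ is precisely a $C_4$ with three edges of one colour and one edge of the other (the $2$+$2$ bicoloured pattern has antipodal turning points, hence is never good). Now assume $(c)$ and that $c$ is not monochromatic; pick a vertex $v$ seeing both colours and let $X^r$, $X^b$ be its red and blue neighbourhoods. If some $u$ in $v$'s class sent edges of both colours into $X^r$, then $v$, $u$ and those two endpoints would span a good $C_4$ (three red edges, one blue); so $u$ is monochromatic to $X^r$, and likewise to $X^b$. If $u$ were monochromatic to all of $X^r\cup X^b$, then $v$, $u$, a vertex of $X^r$ and a vertex of $X^b$ would again span a good $C_4$. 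Hence every vertex of $v$'s class sends one colour to all of $X^r$ and the other colour to all of $X^b$; if all such vertices agree with $v$ this is a V-colouring, and otherwise the agree/disagree partition together with $X^r$, $X^b$ exhibits a split colouring. If you want to salvage your approach, this fixed-vertex analysis is the missing core; the contrapositive framing and Pokrovskiy's characterisation can then be dropped entirely.
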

\begin{proof}
We only show that $(c)$ implies $(a)$, as the implications $(a)\Rightarrow (b)$ and $(b)\Rightarrow (c)$ are straightforward. 
 Unless $c$ is mono\-chromatic, there is a vertex $v$ such that both sets $X^r:=\{w\in N(v):\text{$vw$ is red}\}$ and $X^b:=\{w\in N(v):\text{$vw$ is blue}\}=N(v)\setminus X^r$ are non-empty. 
  Because of~$(c)$, each vertex  from the  partition class that contains $v$  is  monochromatic to each of $X^r$,  $X^b$, but not to $X^r\cup X^b$. So $c$ is a split colouring or a V-colouring.
\end{proof}

\section{Bipartite graphs}
In Section~\ref{sec4.1} we prove   Theorem~\ref{bip}, and in Section~\ref{sec4.2} we give a  new   proof of  Theorem~\ref{Pokbip}. We will need Definition~\ref{goodcycledef} and Lemma~\ref{charac} from Section~\ref{sec3.2}. In Section~\ref{sec4.3}, we show an application of Theorem~\ref{bip} to $3$-edge-colourings.

\subsection{Partition into a  path and a cycle}\label{sec4.1}
We start by observing that clearly, in any $2$-edge-coloured  $K_{n,n}$, 
every bicoloured cycle~$C$ 
 contains two disjoint mono\-chromatic paths of distinct colours spanning the same set of vertices as $C$. Also, any two disjoint 
monochromatic paths  of distinct colours   can be transformed into a bicoloured or monochromatic cycle, as follows.
\begin{lemma}\label{convert}
Let $P_1$ and $P_2$ be disjoint monochromatic paths of distinct colours in an $2$-edge-coloured  $K_{n,n}$ such that $V(K_{n,n})=V(P_1)\cup V(P_2)$. Then there is  a spanning  cycle that is either bicoloured or monochromatic.
\end{lemma}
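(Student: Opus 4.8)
The plan is to build the spanning cycle by joining the two given paths at their ends. Write $P_1=a_1a_2\ldots a_s$ for the red path and $P_2=b_1b_2\ldots b_t$ for the blue path, and let $A,B$ be the two partition classes of $K_{n,n}$, so that $|A|=|B|=n$ and $s+t=2n$. The candidate is the cycle
\[
C:=a_1a_2\ldots a_s\,b_1b_2\ldots b_t\,a_1,
\]
obtained by adding the two \emph{connecting} edges $a_sb_1$ and $b_ta_1$. Two things must be checked: that a suitable choice of orientations of $P_1,P_2$ makes both connecting edges actually present in $K_{n,n}$ (that is, joins vertices in different partition classes), and that the resulting cycle is bicoloured or monochromatic.

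The colouring is the easy part. Every edge of $P_1$ is red and every edge of $P_2$ is blue, so going around $C$ the colour pattern is a red block (the edges of $P_1$), then the edge $a_sb_1$, then a blue block (the edges of $P_2$), then the edge $b_ta_1$, cyclically back to the red block. Each connecting edge lies between the red block and the blue block, and hence, whatever its colour, contributes exactly one colour change to the cyclic sequence of colours. Thus $C$ has exactly two colour changes in all, and is therefore bicoloured. (If one of $P_1,P_2$ degenerates to a single vertex or the empty path, the corresponding block disappears and $C$ may instead come out monochromatic, which is why the statement allows this.)

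The crux is the first point: arranging that $a_sb_1$ and $b_ta_1$ are genuine edges, i.e.\ that $a_s,b_1$ lie in different classes and $b_t,a_1$ lie in different classes. Here a parity and counting argument is needed. Since $s+t=2n$, the numbers $s,t$ have the same parity. If both are even, each of $P_1,P_2$ has one endpoint in $A$ and one in $B$; I would then orient $P_2$ so that $b_1$ lies in the class opposite to $a_s$, which forces $b_t$ into the class of $a_s$, opposite to $a_1$ (as $a_1,a_s$ lie in different classes), so both connecting edges are present. If $s,t$ are both odd, the two endpoints of $P_1$ lie in a common class, and likewise for $P_2$; the counting argument shows these two common classes must differ, since if the endpoints of both $P_1$ and $P_2$ lay in $A$, then $P_1$ would cover $(s+1)/2$ vertices of $A$ and $P_2$ would cover $(t+1)/2$, for a total of $n+1>n=|A|$, a contradiction. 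Hence $a_1,a_s$ lie in one class and $b_1,b_t$ in the other, and again both connecting edges exist.

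The main obstacle is exactly this class bookkeeping: one has to guarantee that the two pairs of endpoints to be joined actually straddle the bipartition. The only subtle case is when $s$ and $t$ are both odd, where there is no freedom in orienting the paths and the argument must instead rely on the exact vertex counts of $A$ and $B$. Once the connecting edges are shown to be present, the colour argument of the second paragraph finishes the proof.
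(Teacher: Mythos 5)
Your proof is correct and follows essentially the same route as the paper's: form the spanning cycle by adding two connecting edges between the endpoints of $P_1$ and $P_2$, using a parity/counting argument on the partition classes (the $(s+1)/2+(t+1)/2>n$ contradiction) to guarantee those edges exist. The paper organizes the endpoint bookkeeping slightly differently (choosing which endpoint of $P_1$ to join to $w_1$, and treating the cases $|V(P_2)|\le 1$ separately), but the underlying idea is identical.
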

\begin{proof}
We can assume that $n\ge 2$ and that $P_1=v_1\ldots v_\ell$ has at least two vertices. If $P_2$ is empty, $v_1$ and $v_\ell$ are in opposite partition classes, and we can add the edge $v_\ell v_1$ to $P_1$ to obtain the desired cycle. If $P_2$ has only one vertex $w$, the edges $v_\ell w$ and $wv_1$ exist, and can be added to $P_1$  with the same outcome.

So assume $P_2=w_1\ldots w_k$ with $k\ge 2$.
Observe that one of the endpoints of $P_1$, say $v_1$, does not lie in the same partition class as $w_1$ (otherwise the three vertices $v_1, v_\ell, w_1$ are in the same partition class which then would be larger than the other partition class, but both have size $n$). Similarly, also $v_\ell$, $w_k$ are in opposite partition classes and, therefore, we can transform the  paths $P_1$, $P_2$ into a bicoloured cycle by adding the edges $v_1w_1$ and~$v_\ell w_k$.
\end{proof}

From the observation above and Lemma~\ref{convert}, it follows that Theorem~\ref{Pokbip} is equivalent to the following.
\begin{lemma}\label{bipi}
Every $2$-edge-coloured  $K_{n,n}$ has a spanning  bicoloured or monochromatic  cycle, unless the colouring is split.
\end{lemma}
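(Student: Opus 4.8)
The plan is to run an extremal argument on good cycles, after first peeling off the two degenerate colourings permitted by Lemma~\ref{charac}. Since $c$ is not split, Lemma~\ref{charac} leaves exactly three possibilities: $c$ is monochromatic, $c$ is a V-colouring, or $c$ admits a good cycle. If $c$ is monochromatic, any Hamilton cycle of $K_{n,n}$ is a spanning monochromatic cycle and we are done. If $c$ is a V-colouring, let $A$ be the partition class all of whose vertices see both colours and split the other class as $B=B^r\cup B^b$ according to the unique colour each of its vertices sees (both parts nonempty, else $c$ is monochromatic); then a Hamilton cycle visiting all of $B^r$ before all of $B^b$ has its edge-colours falling into just two monochromatic arcs, so it is a spanning \emph{bicoloured} cycle. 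Its two turning points both lie in $A$, so it is not good, but Lemma~\ref{bipi} only asks for a bicoloured or monochromatic cycle. The small cases are routine, so from now on $n$ is large.

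It remains to treat the case where $c$ has a good cycle, and here I would take a \emph{longest} good cycle $C$, with turning points $s\in A$ and $t\in B$, so that $C$ decomposes into a red path $R$ from $s$ to $t$ and a blue path $B'$ from $t$ to $s$ meeting only in $\{s,t\}$. Suppose $C$ is not spanning. Since every cycle of $K_{n,n}$ visits the two classes equally often, the leftover vertices occur in both classes, and I fix $w_A\in A\setminus V(C)$ and $w_B\in B\setminus V(C)$. The aim is to reroute $C$ through $w_A,w_B$ into a longer good cycle, contradicting maximality. The basic move uses the edge $w_Aw_B$: for example, if $w_Aw_B$ and $w_Bs$ are red while $w_A$ sends a blue edge to the blue-neighbour of $s$ on $C$, then prepending $w_A,w_B$ to $R$ at $s$ and redirecting the $s$-end of $B'$ from $s$ to $w_A$ yields a good cycle on $V(C)\cup\{w_A,w_B\}$ with turning points $w_A$ and $t$. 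There are colour-symmetric and end-symmetric versions of this move at $s$ and at $t$, together with a further move that inserts $w_A,w_B$ as a fresh pair of turning points using two single edges from $w_A,w_B$ into the red-arc, which applies when $w_Aw_B$ carries the opposite colour.

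The main obstacle is to show that \emph{some} reroute always applies, i.e.\ that the colours of the few relevant edges --- namely $w_Aw_B$, and the edges from $w_A,w_B$ to $s$, to $t$, to their cycle-neighbours, and to the red-neighbours of $s$ and $t$ --- cannot simultaneously block every move. I would organise this by first using colour symmetry to assume $w_Aw_B$ is red, and the reversal symmetry of $C$ (which interchanges the roles of the two turning points $s$ and $t$) to halve the remaining cases, then check the handful of surviving configurations directly. I expect the configurations in which every reroute fails to force $w_A$ or $w_B$ to be monochromatic towards large, structured portions of $C$; the cleanest finish should then be to argue that such a configuration makes $c$ a split colouring, which is impossible here, since the mere existence of the good cycle $C$ already excludes split colourings by Lemma~\ref{charac}. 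This contradiction completes the proof. (One could instead derive Lemma~\ref{bipi} in a single line from Theorem~\ref{Pokbip} and Lemma~\ref{convert}, but as Lemma~\ref{bipi} is \emph{equivalent} to Theorem~\ref{Pokbip} this is circular, so I avoid it.)
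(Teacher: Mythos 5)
Your overall architecture --- peel off monochromatic and V-colourings via Lemma~\ref{charac}, then argue about a longest good cycle --- starts the same way as the paper, but your main step rests on a claim that is false: that a longest good cycle, if not spanning, can always be enlarged, i.e.\ that every colouring admitting a good cycle admits a \emph{spanning} good cycle. Consider $K_{3,3}$ with red being the perfect matching $a_1b_1,a_2b_2,a_3b_3$ and all other edges blue. This colouring is not split (the red class of a split colouring of $K_{3,3}$ consists of $4$ or $5$ edges forming two complete bipartite graphs, never a $3$-edge matching), and it has good cycles, e.g.\ the $C_4$ $a_1b_1a_2b_3a_1$ (one red edge, three blue, turning points $a_1,b_1$). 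But no Hamilton cycle of $K_{3,3}$ is good: checking all six, each either has four or six colour changes or is entirely blue. So the longest good cycle is a $C_4$ missing $a_3,b_2$, and \emph{no} reroute whatsoever --- local or global --- can enlarge it; the spanning cycle promised by Lemma~\ref{bipi} is here the monochromatic blue Hamilton cycle, an object your extension argument can never output. This also destroys your proposed endgame: you want ``every reroute blocked'' to force a split colouring and thus a contradiction, but this example is exactly a configuration with every reroute blocked, and it is not split, so the contradiction is simply not there. The disclaimer that ``small cases are routine'' does not repair this, because nothing in a local insertion argument (the colours of $w_Aw_B$ and of edges from $w_A,w_B$ to a bounded set of cycle vertices) exploits $n$ being large; handling large $n$ would require a genuinely different, global argument that you have not given --- the case analysis is only sketched.

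The paper's proof is built precisely to avoid this trap: it never claims the longest good cycle is spanning. Instead it proves (Lemma~\ref{lem:goodnotsplit}, the long case analysis your plan defers) that \emph{outside} a longest good cycle there is no balanced $C_4$, hence by Lemma~\ref{monopath} the outside is near-monochromatic and carries a monochromatic spanning path, say red; it then re-optimises --- among all good cycles whose complement carries a red spanning path, it picks one maximising the number of blue edges --- and finally glues the outside red path into that cycle at its turning point. The resulting spanning cycle is bicoloured but in general \emph{not} good (this is exactly the situation addressed by Corollary~\ref{coro13}), and in degenerate cases like the matching example the spanning cycle is monochromatic. Since in its main case your plan can only ever produce good cycles, it cannot prove the lemma as stated; the missing idea is the analysis of the colouring outside the longest good cycle, not a cleverer insertion of $w_A,w_B$.
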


Now Theorem~\ref{bip} follows immediately from Lemma~\ref{bipi} and the next lemma.
\begin{lemma}\label{bicospan}
If a $2$-edge-coloured  $K_{n,n}$ has a spanning bicoloured cycle, then it has a partition into a monochromatic cycle and a monochromatic path, of distinct colours.
\end{lemma}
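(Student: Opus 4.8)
The plan is to start from the given spanning bicoloured cycle $C=v_1v_2\ldots v_kv_1$ (so $k=2n$), with red arc $v_1\ldots v_\ell$, blue arc $v_\ell\ldots v_kv_1$, and turning points $v_1,v_\ell$, and to split $C$ into a monochromatic cycle and a monochromatic path by inserting one well-chosen edge. The guiding principle is simple: if I can close one of the two arcs into a monochromatic cycle with a single chord joining its two endpoints, then the remaining vertices form a monochromatic path of the other colour, and together they cover $V(K_{n,n})$.

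First I would dispose of the case that the two turning points lie in different partition classes, i.e.\ that $C$ is a \emph{good} cycle in the sense of Definition~\ref{goodcycledef}. Then the edge $f=v_1v_\ell$ exists, and I examine its colour. If $f$ is red, then $v_1v_2\ldots v_\ell v_1$ is a red cycle and $v_{\ell+1}\ldots v_k$ is a blue path; if $f$ is blue, then $v_\ell v_{\ell+1}\ldots v_kv_1v_\ell$ is a blue cycle and $v_2\ldots v_{\ell-1}$ is a red path. Either way we obtain the required partition, so the good case is immediate.

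The real work is the case that the turning points lie in the \emph{same} class, so $C$ is not good and $f$ does not exist. This is exactly where the bipartite parity obstruction bites: both arcs are now paths whose two endpoints lie in the same class, and an even monochromatic cycle cannot have such endpoints, so neither arc can be closed directly. My plan is to first rule out the four ``shifted'' closures obtained by dropping one turning point and absorbing it into the other colour: if $v_2v_\ell$ or $v_1v_{\ell-1}$ is red I get a red cycle together with a blue path, and if $v_1v_{\ell+1}$ or $v_\ell v_k$ is blue I get a blue cycle together with a red path. If any of these has the colour that closes a cycle we are done; otherwise all four colours are forced (the two former edges blue, the two latter edges red), which pins down a single rigid configuration.

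In that rigid configuration I would reroute. Using the forced red chords $v_1v_{\ell+1}$ and $v_\ell v_k$, I replace the two arcs by the red path $Q=v_{\ell+1}v_1v_2\ldots v_\ell v_k$ and the blue path $W=v_{\ell+2}\ldots v_{k-1}$, and then test the two chords $v_{\ell+1}v_{k-1}$ and $v_kv_{\ell+2}$ joining an endpoint of $Q$ to an endpoint of $W$. If either is blue, I extend $W$ by one blue arc-edge (flipping the parity of one endpoint) and close it into a blue cycle, leaving $Q$ minus one vertex as the red path; if both are red, I absorb those vertices into $Q$, shrinking the blue gap $W$ by two, and iterate. The point is that passing from one arc to the other through a cross-coloured chord changes the parity of an endpoint, so each step either exposes a chord of the closing colour (finishing the proof) or strictly shrinks the gap, and this can be organised as an extremal argument by choosing $C$, among all spanning bicoloured cycles, to minimise the difference of the two arc lengths. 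I expect the main obstacle to be precisely this same-class (non-good) situation: making the rerouting terminate while respecting the parity constraint, and in particular the balanced, fully symmetric configuration in which no single shift or single cross-chord alters the parity. For that extreme case I would fall back on the structure of the colour classes directly — as in a V-colouring, where each colour spans a complete bipartite graph and one builds a balanced monochromatic cycle in one colour and a spanning monochromatic path in the other by hand — and I would check separately the small cases ($n\le 2$), where the cycle or path may be degenerate.
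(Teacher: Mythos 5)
Your plan is workable, and it is organised genuinely differently from the paper's proof, which disposes of the whole lemma in a few lines by making one extremal choice up front: among all spanning bicoloured cycles it fixes $C$ with the \emph{maximum number of red edges}. With that choice the non-good case collapses at your very first chord test: if $v_1v_{\ell+1}$ were red, then $(C-v_kv_1-v_\ell v_{\ell+1})\cup v_1v_{\ell+1}\cup v_\ell v_k$ would be a spanning bicoloured cycle with strictly more red edges, whatever the colour of $v_\ell v_k$ --- a contradiction --- so $v_1v_{\ell+1}$ is blue and the blue arc closes through $v_1$ into a blue cycle, leaving the red path $v_2\ldots v_\ell$. Maximality thus substitutes entirely for your absorption loop. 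Your version instead works with an arbitrary given bicoloured cycle and pays with the iteration; that is a legitimate trade, and the individual moves all check out: the four shifted closures, the forced colours, the existence of the crossing chords (after each round the endpoints of the red path and of the blue gap lie in opposite partition classes, and these classes swap every round), the finish move when a crossing chord is blue, and the absorption when both are red.

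The one point your write-up leaves open --- termination --- is in fact a non-issue, and the fallback you propose (hand-building cycles from V-colouring-type structure, plus small cases) is a red herring: after deleting the red path there is at most one vertex left, so there is no colour structure to exploit. The clean statement is this: since $v_1$ and $v_\ell$ lie in the same class, $\ell$ is odd while $k=2n$ is even, so the blue gap $W_0=v_{\ell+2}\ldots v_{k-1}$ has odd size; each absorption removes exactly two vertices, so the gap stays odd and, if no blue crossing chord ever appears, shrinks to a single vertex $w$. At that point both cycle-neighbours of $w$ are the endpoints of the red path and are joined to $w$ by blue arc edges, so you finish with the red path minus one endpoint together with the one-edge blue cycle at $w$; degenerate cycles are explicitly permitted here (the paper's own proof ends with ``the blue cycle may be just an edge, which is allowed''). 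One further caution: your parenthetical suggestion to recast the loop as an extremal argument minimising the difference of the two arc lengths does not work as stated, because absorption always lengthens the red side and so need not decrease that difference; the quantity that does turn the iteration into a one-step extremal argument is the number of red edges, which is exactly the paper's choice.
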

\begin{proof}
 Among all spanning bicoloured  cycles, we choose a cycle $C$ having the maximum number of red edges. Say
$C=v_1v_2\ldots v_kv_1$, with $v_1v_2$ red, and with  turning points $v_1, v_\ell$. 
If $C$ is good, we are done, since the edge  $v_1v_\ell$ generates a monochromatic cycle with one of the two paths $v_2v_3\ldots v_{\ell-1}$ or $v_{\ell +1}v_{\ell +2}\ldots v_k$, while the other path is  monochromatic  in the other colour. So assume $C$ is not good, that is, vertices $v_1, v_\ell$ are in the same partition class. In particular, $v_1v_{\ell +1}$ and $v_\ell v_k$ are edges.

If the edge $v_1v_{\ell +1}$ is red, then $k>\ell+1$ (as $v_kv_1$ is blue) and the bicoloured cycle 
$(C-v_kv_1-v_\ell v_{\ell+1})\cup v_1v_{\ell +1}\cup v_\ell v_k$
has more red edges than $C$. So $v_1v_{\ell +1}$ is blue, and $K_{n,n}$ decomposes into the red path $v_2\ldots v_\ell$ and the blue cycle
on the remaining vertices (the blue cycle may be just an edge, which is allowed).
\end{proof}

Observe that one can easily modify the last lines of  the proof of Lemma~\ref{bicospan}, arguing that we either have a red path and a blue cycle or both edges $v_1v_{\ell +1}$, $v_\ell v_k$ are blue, giving a spanning bicoloured cycle that is not good and has more red edges than $C$. So, if at the beginning  $C$ is chosen only among the bicoloured spanning  cycles that are not good (if such cycles exist), then we can choose the colour of the cycle.
\begin{corollary}\label{coro13}
Let the edges of $K_{n,n}$ be coloured with red and blue. If there is a spanning bicoloured cycle that is not good, then there is  a partition of $K_{n,n}$ into a red path and a blue cycle.
\end{corollary}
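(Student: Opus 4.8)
The plan is to rerun the proof of Lemma~\ref{bicospan} almost verbatim, changing only the family over which the extremal cycle is chosen. Using the hypothesis that at least one spanning bicoloured cycle that is not good exists, I would pick $C=v_1v_2\ldots v_kv_1$ to be a spanning bicoloured cycle that is \emph{not good} and that has the maximum number of red edges among all such cycles. As in Lemma~\ref{bicospan}, I label $C$ so that $v_1v_2$ is red, with turning points $v_1,v_\ell$ in the sense of Definition~\ref{goodcycledef}; since $C$ is not good, $v_1$ and $v_\ell$ lie in the same partition class, so both chords $v_1v_{\ell+1}$ and $v_\ell v_k$ are present. The red arc of $C$ is $v_1v_2\ldots v_\ell$ and the blue arc is $v_\ell v_{\ell+1}\ldots v_kv_1$.

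The heart of the argument is a three-way split on the colours of these two chords, of which two cases give the conclusion directly. First, exactly as in Lemma~\ref{bicospan}, if $v_1v_{\ell+1}$ is blue then the red path $v_2\ldots v_\ell$ together with the blue cycle $v_1v_{\ell+1}v_{\ell+2}\ldots v_kv_1$ is the desired partition into a red path and a blue cycle. Secondly -- and this is the genuinely new case, exploiting the symmetric chord -- if $v_\ell v_k$ is blue, then the blue cycle $v_\ell v_{\ell+1}\ldots v_kv_\ell$ together with the red path $v_1v_2\ldots v_{\ell-1}$ again partitions $V(K_{n,n})$ into a red path and a blue cycle. As in Lemma~\ref{bicospan}, degenerate outcomes (a one-vertex red path when $\ell=2$, or a blue cycle that is a single edge when $k=\ell+1$) are permitted, so no extra care is needed there. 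Hence it remains only to rule out the case in which both chords $v_1v_{\ell+1}$ and $v_\ell v_k$ are red.

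In that remaining case I would reuse the swap from Lemma~\ref{bicospan}, forming $C':=(C-v_kv_1-v_\ell v_{\ell+1})\cup v_1v_{\ell+1}\cup v_\ell v_k$, a spanning bicoloured cycle whose red arc now runs $v_{\ell+1}v_1v_2\ldots v_\ell v_k$ and so contains $\ell+1$ red edges against the $\ell-1$ of $C$, giving strictly more red edges. The decisive extra point, which was irrelevant in Lemma~\ref{bicospan} but is exactly what makes the restricted extremal choice work here, is that $C'$ is again \emph{not good}: its turning points are $v_{\ell+1}$ and $v_k$, and since $v_\ell$ lies in the class of $v_1$, the neighbour $v_{\ell+1}$ of $v_\ell$ lies in the opposite class, as does the neighbour $v_k$ of $v_1$; thus $v_{\ell+1}$ and $v_k$ share a partition class and $C'$ is not good. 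This contradicts the choice of $C$ as a not-good spanning bicoloured cycle with the most red edges, so this case cannot occur and we are always in one of the two favourable cases above. The main obstacle, and the only real departure from Lemma~\ref{bicospan}, is precisely this last verification: once the extremal cycle is confined to the not-good family, one must ensure the improving cycle stays inside that family, which is why the parity/partition-class bookkeeping is needed and why the symmetric chord $v_\ell v_k$ must be treated as its own case rather than being absorbed into the swap.
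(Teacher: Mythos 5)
Your proposal is correct and takes essentially the same approach as the paper: the paper proves the corollary exactly by rerunning the extremal argument of Lemma~\ref{bicospan} over the not-good spanning bicoloured cycles, observing that either one of the chords $v_1v_{\ell+1}$, $v_\ell v_k$ is blue (yielding the red path and blue cycle) or both are red, in which case the swapped cycle is again not good and has more red edges, a contradiction. Your verification that the swapped cycle's turning points $v_{\ell+1}$ and $v_k$ share a partition class is precisely the ``extra point'' the paper's remark leaves implicit (and your write-up even corrects a small typo in the paper, which says ``blue'' where ``red'' is meant for the contradiction case).
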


\subsection{Alternative proof of Theorem~\ref{Pokbip}}\label{sec4.2}
We give an alternative proof of Theorem~\ref{Pokbip} by proving Lemma~\ref{bipi}.
We start by excluding a certain structure  outside any longest good cycle. 
Call a $C_4$  {\it balanced} if it has exactly two edges of each colour.

\begin{lemma}\label{lem:goodnotsplit}
If
$c$ is a $2$-edge-colouring of   $E(K_{n,n})$ 
and  $C=v_1\ldots v_kv_1$ is a longest good  cycle for   $c$, then the  restriction $c'$ of $c$ to $K_{n,n}-V(C)$ has no balanced~$C_4$.
\end{lemma}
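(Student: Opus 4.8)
The plan is to argue by contradiction. Suppose $C=v_1\ldots v_kv_1$ is a longest good cycle, with turning points $v_1,v_\ell$ (in the sense of Definition~\ref{goodcycledef}), and suppose the restriction $c'$ nevertheless has a balanced $C_4$, say $F$, on four vertices lying outside $C$. Writing $A,B$ for the two partition classes of $K_{n,n}$, the cycle $C$ splits into a monochromatic red arc and a monochromatic blue arc meeting at the turning points, which by goodness lie in different classes; say $v_1\in A$ and $v_\ell\in B$. Since $F$ is a $C_4$ it has exactly two vertices in each of $A,B$, and being balanced it has two red and two blue edges; up to symmetry it has one of two shapes, either the two edges of each colour are adjacent (so $F$ is itself a bicoloured $C_4$), or the colours alternate around $F$. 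I would aim to use $F$ to build a good cycle strictly longer than $C$, contradicting maximality.

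The basic tool is an insertion move. If some edge $v_iv_{i+1}$ of $C$ and some edge $xy$ of $F$ have the same colour $\gamma$, and the two edges needed to splice $xy$ into the spot of $v_iv_{i+1}$ (joining $v_i$ and $v_{i+1}$ across the classes to the endpoints of $xy$) are also $\gamma$, then replacing $v_iv_{i+1}$ by the detour through $x,y$ lengthens $C$ by two vertices without moving either turning point; the result is again a bicoloured cycle with turning points $v_1,v_\ell$, hence still good. Note that the bipartite parity of $K_{n,n}$ forces any enlargement to add an even number of vertices, so a monochromatic edge of $F$ is the natural unit to insert. When $F$ has the first shape there is also a turning-point move: $F$ then has a Hamilton path that switches colour exactly once, and replacing the monochromatic edge incident to a turning point of $C$ by this path relocates the turning point into $F$; one checks that, with the right choice of endpoint, the relocated turning point lands in the class opposite to the fixed turning point, so the enlarged cycle stays good.

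It remains to show that the colouring cannot simultaneously block every such move, and this is where I expect the real work to lie. Since $K_{n,n}$ is complete bipartite, every connecting edge between $V(F)$ and $V(C)$ is present, so only the colours are at issue; the freedom available is the choice of colour $\gamma$, the choice of one of the two $\gamma$-coloured edges of $F$, the choice of either turning point, and the choice of orientation of $C$. The hard part is the alternating shape of $F$: it has no Hamilton path that switches colour only once, so it cannot be absorbed by a single monotone detour at a turning point and must instead be taken by monochromatic detours of a single arc (and, since adding just two vertices already suffices, by a single such detour). I would therefore analyse the colours of the edges joining the four vertices of $F$ to the endpoints of a fixed edge of $C$, and to the two turning points, and verify case by case that at least one insertion is realizable; the two genuinely delicate points are handling the alternating $C_4$ and, in each case, checking that the retained or relocated turning points lie in different classes, so that the longer cycle is good and not merely bicoloured. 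The resulting contradiction with the maximality of $C$ shows that $c'$ has no balanced $C_4$.
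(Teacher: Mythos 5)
Your proposal correctly identifies the overall strategy of the paper's proof --- assume a balanced $C_4$, call it $F$, survives outside a longest good cycle $C$, and use it to build a longer good cycle --- but it stops exactly where the actual proof begins. The entire substance of the paper's argument is the case analysis you defer (``this is where I expect the real work to lie''): one must show that the colours of the edges joining $V(F)$ to $V(C)$, which are not under our control, cannot simultaneously block every lengthening move. The paper does this by repeatedly invoking maximality to pin down colours of specific connecting edges (first $x_1v_k$ red and $x_1v_2$ blue, then further edges depending on the case) and then exhibiting, in each of the two shapes of $F$ and several subcases, a surgery that is forced to produce a longer good cycle. Nothing in your proposal carries out, or even sketches, how this analysis closes; as it stands it is a plan, not a proof.

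Moreover, the toolkit you restrict yourself to appears too weak to close it. For the alternating $C_4$ you claim that a single monochromatic detour preserving both turning points must be realizable; the paper's Case 1 (which is exactly the alternating shape) indicates otherwise: there the contradiction is reached only via richer surgeries, namely simultaneous insertion of two edges of $F$ into two different slots (e.g.\ $(C-v_\ell v_{\ell+1}-v_kv_1)\cup v_\ell y_1y_2v_{\ell+1}\cup v_kx_1x_2v_1$), a move that deletes a vertex of $C$ while inserting three vertices of $F$ (e.g.\ $(C-v_k)\cup v_{k-1}y_2x_1x_2v_1$), and a surgery relocating \emph{both} turning points (e.g.\ $(C-v_1v_2-v_{\ell-1}v_\ell)\cup v_1y_2x_1v_2\cup v_{\ell-1}x_2y_1v_\ell$, whose new turning points are $v_2$ and $v_{\ell-1}$). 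Your turning-point move for the bicoloured $C_4$ also needs more care than ``the right choice of endpoint'': every Hamilton path of $F$ that switches colour once has its switch vertex in one fixed partition class (the class containing the two turning points of $F$ itself), so goodness can only be preserved by relocating the turning point of $C$ lying in that same class, and even then the two connecting edges must have prescribed colours, which can fail unless further maximality arguments are brought in. These are precisely the delicate points that the paper's two-case analysis resolves and that are absent from your proposal.
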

\begin{proof}
Say $C$ has turning points $v_1\in V_1$, $v_\ell \in V_2$, and $v_1v_2$ is red. For contradiction, assume $c'$ has a balanced $C_4$. Then, after exchanging the roles of $V_1$ and $V_2$ if needed,  there are $x_i, y_i\in V_i\setminus V(C)$, for $i=1,2$, with $x_1x_2$  red, $y_1x_2$ blue, and exactly one of $x_1y_2$, $y_1y_2$ red. We will split our proof accordingly. But first note that by colour symmetry we can assume $v_1x_2$ is  red. As the cycle $(C-v_1v_k)\cup v_1x_2x_1v_k$ is longer than $C$, it is not good. So  $x_1v_k$ is red, and similarly,  $x_1v_2$ is blue.

{\it Case 1: $y_1y_2$ is  red and $x_1y_2$ is blue.}  
If $v_1y_2$ is red, then as above,  also $y_1v_k$ is red. Moreover,  $\ell <k$ (replace $v_kv_1=v_\ell v_1$ with $v_kx_1y_2v_1$ otherwise).
 Replace $v_{k-1}v_kv_1$ with  $v_{k-1}y_2y_1v_kx_1x_2v_1$ to see that $y_2v_{k-1}$ is blue. The good cycle $(C-v_k)\cup v_{k-1}y_2x_1x_2v_1$ is  longer  than $C$, a contradiction. 
So  $v_1y_2$ is blue. Recall that also~$x_1v_2$ is blue. If $y_1v_\ell$ is red then so is $y_2v_{\ell+1}$ (in particular $\ell\neq k$), and the good cycle $(C-v_\ell v_{\ell+1}-v_kv_1)\cup v_\ell y_1y_2v_{\ell+1}\cup v_kx_1x_2v_1$ is longer than $C$, a contradiction. So $y_1v_\ell$ is blue. By maximality of $C$ also $x_2v_{\ell-1}$ is blue (and  thus $\ell\neq 2$), and $(C-v_1v_2- v_{\ell-1}v_\ell)\cup  v_1y_2x_1v_2\cup v_{\ell-1}x_2y_1v_\ell$ is a longer good cycle, a contradiction.

{\it Case 2: $x_1y_2$ is red and  $y_1y_2$ is blue.}  If $y_1v_\ell$ is blue, then by maximality of $C$, so is $y_2v_{\ell -1}$, and $(C-v_kv_1-v_{\ell -1}v_\ell)\cup v_kx_1x_2v_1\cup v_{\ell}y_1y_2v_{\ell -1}$ is a longer good cycle, so $y_1v_\ell$ is red. Note that $\ell\neq k$, as otherwise $(C-v_\ell v_1\cup v_\ell y_1 x_2 v_1)$ is  longer than $C$. Also, $v_{\ell +1}y_2$ is red, as otherwise  $(C-v_kv_1-v_\ell v_{\ell +1})\cup v_kx_1x_2v_1\cup v_{\ell}y_1y_2v_{\ell +1}$ is a longer good cycle. 
Now, $v_ky_1$ is blue, as otherwise 
$(C-v_kv_1-v_\ell v_{\ell+1})\cup v_ky_1v_\ell \cup v_1x_2x_1y_2v_{\ell +1}$ is a longer good cycle.
Consider $(C-v_kv_1-v_\ell)\cup v_kx_1x_2v_1\cup v_{\ell -1}y_2 v_{\ell +1}$ to see that $v_{\ell -1}y_2$ is blue. If
 $x_1v_\ell$ is blue, add $v_2x_1v_\ell$ 
and $v_{\ell-1}y_2y_1v_k$ 
 to $C-v_1-v_{\ell-1}v_{\ell}$ for a longer good cycle, and if $x_1v_\ell$ is red, 
replace $v_kv_1$ and $v_\ell v_{\ell +1}$ with  $v_ky_1x_2v_1$ and $v_\ell x_1y_2v_{\ell +1}$ for a longer good cycle, a contradiction.
\end{proof}

Now we determine which colourings avoid balanced $C_4$'s. 
Call an edge-coloring $c$ of $K_{n,n}$ {\it near-monochromatic}  if  there is a colour that  appears on at most one edge.

\begin{lemma}\label{monopath}
Any edge-colouring $c$ of $K_{n,n}$  having no balanced $C_4$ is near-mono\-chromatic.
\end{lemma}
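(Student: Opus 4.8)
The plan is to unfold the conclusion: \emph{near-monochromatic} means that some colour is used on at most one edge, so I must show directly that any colouring of $K_{n,n}$ with no balanced $C_4$ has this property. Write $A,B$ for the two partition classes. The cases $n\le 2$ are immediate: for $n=1$ there is a single edge, and for $n=2$ the whole graph is a single $C_4$, which by hypothesis is not balanced, hence carries $0,1,3$ or $4$ red edges and is near-monochromatic in each case. So from now on I would assume $n\ge 3$.

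The first step is to understand what the hypothesis says about a single pair $a_1,a_2\in A$. For each $b\in B$ record the pair of colours $(c(a_1b),c(a_2b))$ and split $B$ into the four classes $B_{RR},B_{RB},B_{BR},B_{BB}$ accordingly. Any two vertices $b,b'\in B$ give a $C_4$ on $a_1,a_2,b,b'$, and a direct check of the colour patterns shows that this $C_4$ is balanced whenever $B_{RR}$ and $B_{BB}$ are both nonempty, or whenever $|B_{RB}|+|B_{BR}|\ge 2$. Ruling both out yields a clean dichotomy for every pair $a_1,a_2$: either the two vertices together send at most one blue edge to $B$ (call this Case A, which occurs when $B_{BB}=\emptyset$), or together they send at most one red edge (Case B, when $B_{RR}=\emptyset$).

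The second, and main, step is to upgrade this pairwise information to a \emph{global} bound of $1$ on one colour class; the danger to guard against is that the dichotomy only bounds the minority edges \emph{per vertex}, which on its own would permit up to $n$ edges of each colour. Let $A_R$ and $A_B$ be the sets of vertices of $A$ incident to at least one red, respectively blue, edge. If $|A_R|\le 1$ then all red edges meet a single vertex $v$; should $v$ carry two red edges $vb_1,vb_2$, any other vertex $a'$ (which has no red edge) would make $vb_1a'b_2$ a balanced $C_4$, so in fact there is at most one red edge and we are done, and symmetrically if $|A_B|\le 1$. So I may assume $|A_R|,|A_B|\ge 2$. Now any two vertices of $A_R$ cannot lie in Case B, hence lie in Case A; applying this to all pairs inside $A_R$ forces at most one vertex of $A_R$ to meet a blue edge, and that vertex to meet at most one, so $A_R$ is incident to at most one blue edge in total, and symmetrically $A_B$ is incident to at most one red edge.

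Finally I would combine these two bounds on the set $A_R\cap A_B$ of vertices seeing both colours: it is incident to at most one red and at most one blue edge, yet each of its vertices needs one edge of each colour and has $n\ge 3$ incident edges, forcing $A_R\cap A_B=\emptyset$. Then $A_R$ and $A_B$ are disjoint and both nonempty, every vertex of $A_R$ is all-red and every vertex of $A_B$ is all-blue, and one vertex of each class together with any two vertices of $B$ spans a balanced $C_4$, a contradiction. Hence the regime $|A_R|,|A_B|\ge 2$ never arises, and the colouring is near-monochromatic. The single delicate point, and the reason for isolating $|A_R|\le 1$ and $|A_B|\le 1$ as separate cases before the counting, is exactly that in those degenerate regimes the pairwise dichotomy has no pair to act on and must be replaced by the direct balanced-$C_4$ construction.
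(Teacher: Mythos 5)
Your proof is correct, but it is organised differently from the paper's. You first establish a per-pair dichotomy (for any two vertices $a_1,a_2$ in one class, the absence of balanced $C_4$'s forces them jointly to send at most one blue edge or at most one red edge to the other side), and then run a global, colour-symmetric argument on the sets $A_R$, $A_B$ of vertices seeing red, respectively blue: each of $A_R$, $A_B$ is incident to at most one edge of the minority colour, whence $A_R\cap A_B=\emptyset$ by a degree count (using $n\ge 3$), and the disjointness then produces a balanced $C_4$ from one all-red and one all-blue vertex. The paper instead anchors the whole argument at a single vertex $v$ seeing both colours, assumes by symmetry $|R_v|\ge|B_v|\ge 1$, and propagates: if $|R_v|\ge 2$, every other vertex $x$ must send some red edge to $R_v$, hence only red edges to $B_v$, hence only red edges everywhere, which forces $|B_v|=1$ and exactly one blue edge; the degenerate case $|R_v|=|B_v|=1$ (i.e.\ $n=2$) is dispatched directly. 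The paper's vertex-anchored propagation is shorter, needing only three quick $C_4$ checks in sequence; your version is a little longer but buys a cleanly quantified intermediate statement (the pairwise dichotomy) whose cases are easy to verify independently, and it makes explicit where the hypothesis $n\ge 3$ enters, which the paper absorbs into the $|R_v|=1$ case. Both proofs ultimately rest on the same elementary device: balanced $C_4$'s through two vertices of one class.
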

\begin{proof}
Assume $c$ is not monochromatic. Then some vertex $v$ sees both colours. Let $V_1, V_2$ be the partition classes of $K_{n,n}$, and use symmetry   to assume that $v\in V_1$, and that  $|R_v|\ge |B_v|\ge 1$, where $R_v:=\{u\in V_2: uv\text{ is red}\}$ and $B_v:=V_2\setminus R_v$.

If $|R_v|=1$, then $|B_v|=1$, and since $c$ has no balanced $C_4$, it is near-monochromatic. So assume $|R_v|\ge 2$. Then, repeatedly using the fact that there is no balanced $C_4$, we see that each $x\in V_1\setminus\{v\}$ sends at least one red edge to $R_v$, and thus only red edges to~$B_v$, and therefore, only red edges to $R_v$.
In particular $|B_v|=1$ (otherwise $v$, $x$ and any two vertices from $B_v$ span a balanced $C_4$). Thus  $c$ has exactly one blue edge (from $v$ to $B_v$), and is therefore near-monochromatic.
\end{proof}

We are ready to prove Lemma~\ref{bipi}.

\begin{proof}[Proof of Lemma~\ref{bipi}]
We can assume the colouring $c$ is neither a V-colouring nor  monochromatic, as otherwise
it is easy to find  a spanning  bicoloured or monochromatic  cycle.  By  Lemma~\ref{charac}, $c$ has a  good cycle, let $C_1$ be a largest such cycle.

For any good cycle $C$ in $c$, let $c(C)$ denote the restriction  of $c$ to $K_{n,n}-V(C)$. 
By 
Lemmas~\ref{lem:goodnotsplit} and~\ref{monopath},  
$c(C_1)$ is near-monochromatic, and thus has a monochromatic spanning path, say in red.
Among all good cycles  $C$ with the property that $c(C)$ has a red spanning path, choose  $C_2$, maximising the number of blue edges. Say~$C_2=v_1\ldots v_kv_1$ with $v_1v_2$ red and turning points $v_1$ and $v_\ell$,
and let
 $P=x_1\ldots x_h$ be a red spanning path in $c(C_2)$. As $v_1$ and $v_\ell$ are in distinct partition classes,  one of them, say $v_1$, is in the same partition class as $x_1$.  If both $x_1v_\ell$,  $v_1x_h$ are  blue, the cycle $v_1x_h\cup P\cup x_1v_\ell v_{\ell +1}\ldots v_kv_1$ contradicts our choice of $C_2$.  So at least one of the two  edges, say $x_1v_\ell$, is red. 
Then $(C_2-v_\ell v_{\ell +1})\cup v_\ell x_1Px_hv_{\ell +1}$ is a spanning bicolored cycle.
\end{proof}

\subsection{Three colours}\label{sec4.3}

We now  turn to colourings of complete and complete bipartite graphs with three colours. Then, it is known that the edges of $K_n$ cannot always be partitioned into three paths of {\it distinct} colours, so one drops this condition. In any  $3$-edge-colouring of $K_n$ there is a partition into three monochromatic paths~\cite{P14} and  a partition of almost all vertices into three monochromatic cycles~\cite{3col}. Pokrovskiy~\cite{P14} found $3$-colourings that do not admit a   partition of all vertices into three monochromatic cycles, however, he conjectures~\cite{sparse} that three cycles and one vertex are always sufficient.  

In  $3$-edge-colourings of the complete bipartite graph $K_{n,n}$,  almost all vertices can be partitioned into five monochromatic cycles~\cite{LSS17}, while folklore 3-colour split colourings\footnote{A 3-colour split colouring is obtained from a proper edge colouring of $K_{3,3}$ by replacing each vertex with a set of vertices and each edge with a complete bipartite graph in the colour of the edge.} show that a partition into fewer than five monochromatic cycles or paths is not always possible. It is conjectured that five is indeed the optimal number for partitions into monochromatic  paths~\cite{P14}. 
For more details and variants with more colours, see~\cite{Gy16}.

In~\cite{P14, sparse}, Pokrovskiy shows how the following two lemmas for $2$-edge-colourings can be used to obtain partitions of $3$-edge-coloured graphs.

\begin{lemma}$\!\!${\rm\bf\cite{BKS12, P14}}\label{lem1}
For any red-blue colouring of the edges of $K_{n}$  there is a partition of the vertices into
a red path and a blue balanced complete bipartite graph.
\end{lemma}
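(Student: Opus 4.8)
The plan is to prove Lemma~\ref{lem1} by an extremal argument, matching the style of the longest-path and largest-cycle arguments used elsewhere in the paper. I consider all triples $(P,X,Y)$ in which $P=p_1p_2\ldots p_s$ is a red path and $X,Y$ are disjoint vertex sets, disjoint from $V(P)$, with $|X|=|Y|$ and all edges between $X$ and $Y$ blue, so that $(X,Y)$ spans a blue balanced complete bipartite graph. Such triples exist (take $P$ to be a single vertex and $X=Y=\emptyset$), so I may choose one maximising the number of covered vertices $|V(P)|+|X|+|Y|$, breaking ties by preferring a longer path. The entire lemma reduces to showing that this extremal configuration covers all of $V(K_n)$, since then $P$ is the desired red path and $(X,Y)$ the desired blue balanced complete bipartite graph.

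Assume for contradiction that some vertex $v$ is uncovered. First, maximality of the path forces both $vp_1$ and $vp_s$ to be blue: a red edge from $v$ to an endpoint would extend $P$ by one vertex and raise the coverage. (The degenerate cases $s\le 1$ and $P=\emptyset$ are dealt with directly, since an uncovered vertex can always be made into a length-one path.) The decisive move is then the following exchange. If $v$ sends only blue edges to $Y$ and the endpoint $p_1$ sends only blue edges to $X$, then, using that $vp_1$ is blue, I delete $p_1$ from $P$ and place the balanced pair $v\in X$, $p_1\in Y$: the sets $X\cup\{v\}$ and $Y\cup\{p_1\}$ still induce a blue balanced complete bipartite graph, while the path $p_2\ldots p_s$ loses only one vertex, for a net gain of one covered vertex and a contradiction. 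Applying this together with its three colour- and endpoint-symmetric variants, I conclude that $v$ must send a red edge into $X$ or into $Y$, or one of the endpoints $p_1,p_s$ must send a red edge into the opposite side of the bipartite graph.

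The remaining task, which I expect to be the main obstacle, is to convert such a forced red edge into a genuine improving move while simultaneously preserving the \emph{redness} of the path and the \emph{balance} $|X|=|Y|$ of the bipartite graph. The difficulty is that the forced red edges may point \emph{into} the bipartite part rather than toward a path endpoint, and any exchange that pulls a vertex out of $X$ or $Y$ must be compensated on the other side to restore balance, so the bookkeeping is delicate. The key structural lever I would use here is that red edges create red connectivity: a red edge from $v$ to some $x\in X$ lets me reroute, building a longer or differently-anchored red path that swallows $v$, while the blue edges $vp_1,vp_s$ together with the blue completeness between $X$ and $Y$ are used to reinsert and rebalance the displaced vertices. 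Concretely, I would exploit the dichotomy on the red graph: whenever the red graph is disconnected, a red component and its complement are joined entirely by blue edges and so already supply a large blue complete bipartite graph that the extremal configuration can absorb, and the tie-break favouring long red paths prevents the configuration from stalling with an unused extendable red edge. Pushing this rerouting-and-rebalancing analysis to cover every case is the crux of the proof.
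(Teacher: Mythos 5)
First, a point of reference: the paper you were asked to match contains \emph{no} proof of Lemma~\ref{lem1} at all --- it is imported with the citations \cite{BKS12, P14}. So your proposal can only be judged on its own merits, and on those merits it has a genuine gap: it stops exactly where the work begins. You set up the extremal configuration (maximise coverage subject to $|X|=|Y|$), correctly dispose of the easy exchanges, and then defer ``the crux'' --- the case where the forced red edges go into $X$ or $Y$ --- to a vague rerouting-and-rebalancing plan. That case cannot be handled by moves of the kind you describe. Concretely, take $K_5$ on $\{p_1,p_2,v,x,y\}$ with $p_1p_2$, $vx$, $vy$ red and every other edge blue, and consider the balanced configuration $P=p_1p_2$, $X=\{x\}$, $Y=\{y\}$, which covers all but $v$. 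Here $v$ is blue to both endpoints, $v$ sends red edges into \emph{both} $X$ and $Y$, and $p_1,p_2$ are blue to $X\cup Y$: every exchange in your list either destroys the balance $|X|=|Y|$ or the redness of the path. The unique spanning solution for this colouring is $P=\{v\}$ with bipartition $(\{p_1,p_2\},\{x,y\})$, which shares essentially nothing with the stuck configuration --- the entire old path must dissolve into one side of the bipartite graph. So a local-improvement argument around your extremal object cannot be completed with your moves; nor does ``red connectivity'' help, since the same obstruction persists when the red graph is connected. Your proof is therefore incomplete at its decisive step.

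The standard way out (and essentially what the cited references do) is to swap the roles of your two constraints: keep \emph{full coverage} at all times and make \emph{balance} the optimised quantity. Consider all partitions $V(K_n)=V(P)\cup X\cup Y$ where $P$ is a red path and all $X$--$Y$ edges are blue, with no requirement that $|X|=|Y|$; these exist ($P=\emptyset$, $X=V$, $Y=\emptyset$). Choose one minimising $\bigl||X|-|Y|\bigr|$, say $|X|\ge|Y|$, and suppose $|X|>|Y|$. If $P=\emptyset$, promote a vertex of $X$ to a one-vertex path. If some $x\in X$ sends a red edge to an endpoint of $P$, append $x$ to $P$. Otherwise both endpoints of $P$ are blue to all of $X$, and moving an endpoint of $P$ into $Y$ keeps all $X$--$Y$ edges blue. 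Each move reduces the imbalance by one, a contradiction; hence $|X|=|Y|$ and the chosen partition is the desired one. Note that the winning move is precisely the one your coverage-maximising framework forbids: shrinking the path and feeding its endpoint to the bipartite graph, which is harmless once balance, rather than coverage, is the quantity being repaired.
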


\begin{lemma}$\!\!${\rm\bf\cite{sparse}}\label{lem2}
For any red-blue colouring of the edges of $K_{n,n}$  there is a partition of the vertices into
a red path and two blue balanced complete bipartite graphs.
\end{lemma}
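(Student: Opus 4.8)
The plan is to run an extremal argument analogous to the one behind Lemma~\ref{lem1}, but carefully tracking the bipartition. Write $A,B$ for the two classes of $K_{n,n}$, each of size $n$. Since $K_{n,n}$ has no edges inside a class, every blue balanced complete bipartite subgraph consists (up to relabelling) of a pair $(X,Y)$ with $X\subseteq A$, $Y\subseteq B$, $|X|=|Y|$, and all $X$--$Y$ edges blue. I would consider configurations $(P,H_1,H_2)$, where $P$ is a red path and $H_1,H_2$ are vertex-disjoint blue balanced complete bipartite subgraphs, all pairwise disjoint, and choose one maximising the number of covered vertices $|V(P)|+|V(H_1)|+|V(H_2)|$, breaking ties by preferring longer $P$. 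Writing $U$ for the uncovered set and $U_A=U\cap A$, $U_B=U\cap B$, the goal is to show $U=\emptyset$.

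Before the main argument, it helps to see why \emph{two} blue bicliques are the right number, which one can read off from the extremal split-type colourings: if red is $K_{A_1,B_1}\cup K_{A_2,B_2}$ and blue is $K_{A_1,B_2}\cup K_{A_2,B_1}$, then a single red path lives inside one red component, while the remaining vertices fall into the two blue components $A_1\cup B_2$ and $A_2\cup B_1$, and covering them needs one balanced blue biclique of each orientation. So the two bicliques correspond to the two orientations of blueness across the bipartition, while the red path mops up the unbalanced remainder of one red component.

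For the main step I would use maximality to constrain $U$. Writing $P=p_1\ldots p_k$, any edge from an endpoint of $P$ to a vertex of $U$ in the opposite class must be blue, since otherwise $P$ extends and covers one more vertex. Likewise, a vertex $u\in U_A$ sending only blue edges to some $Y_i$ could be absorbed into $H_i$, provided a matching vertex of $U_B$ is simultaneously added to $X_i$. The whole difficulty is to perform such absorptions while keeping both bicliques balanced and respecting $|A|=|B|$. I would argue that, unless $U=\emptyset$, there is always an admissible move: either a red edge from $U$ to an endpoint of $P$ (or a short reroute along $P$, of the kind used in Lemmas~\ref{bicospan} and~\ref{lem:goodnotsplit}) that lets $P$ grow, or enough blue edges from $U$ into one of the $H_i$, or into a newly created biclique, to enlarge the blue side; here the second biclique supplies the extra freedom needed to absorb a surplus on the $A$-side independently of a surplus on the $B$-side.

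The main obstacle I anticipate is exactly this balancing step: a blue balanced biclique can only absorb equal numbers from $A$ and $B$, and the red path changes the $A$/$B$ balance by at most one, so the leftover can be distributed very unevenly between $U_A$ and $U_B$. Handling it will likely require a case analysis on the relative sizes of $U_A,U_B$ and on which colour dominates the edges between $U$ and the covered part, using one biclique to correct an $A$-surplus and the other a $B$-surplus. As a fallback I would try to deduce Lemma~\ref{lem2} from Lemma~\ref{lem1} by contracting the balanced blue biclique produced there and recursing on the two sides, but I expect the direct extremal argument above to be cleaner once the balancing bookkeeping is set up.
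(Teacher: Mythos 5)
You should first note that the paper does not prove Lemma~\ref{lem2} at all: it is quoted verbatim from Pokrovskiy~\cite{sparse} and used as a black box in Section~\ref{sec4.3}. So there is no in-paper proof to match your approach against, and your proposal must stand on its own as a proof. It does not: it is a strategy outline whose decisive step is asserted rather than established. The sentence ``I would argue that, unless $U=\emptyset$, there is always an admissible move'' \emph{is} the lemma; everything before it is setup, and everything after it concedes that the step is open. The only concrete consequence you extract from maximality is that edges from the endpoints of $P$ to $U$ (in the opposite class) are blue. That is far too weak: absorbing a vertex $u\in U_A$ into $H_i=(X_i,Y_i)$ requires $u$ to be blue to \emph{all} of $Y_i$ (after the enlargement), and simultaneously requires a partner $u'\in U_B$ blue to all of $X_i\cup\{u\}$. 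Nothing in your setup rules out the configuration where an uncovered vertex is blue to the path's endpoints but sends red edges into parts of $Y_1$ and $Y_2$ and blue edges to interior vertices of $P$, which blocks every move you name; dealing with such configurations is exactly the rerouting/exchange machinery that a real proof must supply, and which you only gesture at by analogy with Lemmas~\ref{bicospan} and~\ref{lem:goodnotsplit} (which concern cycles in a different extremal setup, not path-plus-biclique coverings).

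Two further points. First, you correctly identify the balancing problem---a balanced biclique can only absorb equal numbers from $A$ and $B$, while $U_A$ and $U_B$ can be very uneven---but you explicitly defer it (``will likely require a case analysis''), so by your own account the argument is incomplete precisely where it is hardest. Second, the fallback of deducing Lemma~\ref{lem2} from Lemma~\ref{lem1} by ``contracting the balanced blue biclique and recursing'' cannot work as stated: Lemma~\ref{lem1} is about colourings of the complete graph $K_n$, whereas here the host graph is $K_{n,n}$, which has no edges inside the classes; neither lemma applies to the graph obtained after such a contraction. Your extremal framework and the split-colouring heuristic for why two bicliques are needed are sensible starting points, but as it stands the proposal contains no proof of the induction/absorption step, and hence has a genuine gap.
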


We can use Pokrovskiy's approach from~\cite{sparse} together with our Theorem~\ref{bip} to obtain partitions of a $3$-edge-coloured $K_n$ or $K_{n,n}$ into a mix of monochromatic cycles and paths.
For $K_n$, we start with any $3$-colouring of its edges, say in colours red, blue and green. We apply Lemma~\ref{lem1} with blue and green merged, which gives a red path and a balanced complete bipartite graph  that only uses blue and green. 
To this graph we apply Theorem~\ref{bip}  to find a partition into a monochromatic path and a monochromatic cycle, unless the graph is split-coloured, in which case we easily find a partition into three monochromatic cycles.
In total, we obtain a partition of $K_n$ into either {two monochromatic paths and one monochromatic cycle}, or {one monochromatic path and three monochromatic cycles.}

Applying Lemma~\ref{lem2} and Theorem~\ref{bip}  in the same way
 we obtain a partition of~$K_{n,n}$ into either three monochromatic paths and two monochromatic cycles, or into two monochromatic paths and four monochromatic cycles, or into one monochromatic path and six monochromatic cycles. 
 We can reduce the number of elements of the partition by analysing the edges between the two balanced blue-green complete bipartite graphs given by Lemma~\ref{lem2}: 
\begin{lemma}
For any  $3$-edge-colouring of $K_{n,n}$ there is a partition of the vertices into
either three monochromatic paths and two monochromatic cycles or into two monochromatic paths and four monochromatic cycles.
\end{lemma}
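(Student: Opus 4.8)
The plan is to run Lemma~\ref{lem2} with two of the three colours merged, do the routine work with Theorem~\ref{bip}, and concentrate all the effort on one bad case. Concretely, merge blue and green and apply Lemma~\ref{lem2} to obtain a red path $P$ together with two vertex sets $W_1,W_2$, where each $W_i$ induces a balanced complete bipartite graph $B_i$ all of whose edges are blue or green. Throughout I use the paper's convention that $\emptyset$, a single vertex and a single edge all count as cycles, and that a path may be cut into two (one possibly empty); consequently any partition that uses at most two paths and at most four cycles can be padded to exactly ``two paths and four cycles'', and any partition using at most three paths and at most two cycles can be padded to ``three paths and two cycles''. So it suffices to reach one of these two inequality-profiles. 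Applying Theorem~\ref{bip} to each $B_i$ that is not split yields a monochromatic path and a monochromatic cycle; once I show that a split $B_i$ decomposes into at most three cycles, the cases where at most one of $B_1,B_2$ is split land immediately inside the allowed profiles (giving $3$ paths and $2$ cycles, or $2$ paths and $4$ cycles).

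The first building block I would establish is that a single split blue--green balanced complete bipartite graph $K_{m,m}$ decomposes into at most three monochromatic cycles. Writing its split structure as green on $K(X',Y')\cup K(X'',Y'')$ and blue on $K(X',Y'')\cup K(X'',Y')$, and assuming $|X'|\ge|Y'|$ (hence $|X''|\le|Y''|$), I take a green cycle through the balanced core of $K(X',Y')$ and a green cycle through the balanced core of $K(X'',Y'')$; a one-line count shows the two leftover sets, one inside $X'$ and one inside $Y''$, have equal size and span a balanced complete bipartite graph inside the blue block $K(X',Y'')$, which a single blue cycle finishes off. With this in hand, the genuinely easy subcase of the both-split situation is when \emph{no cross edge is red}: then $W_1\cup W_2$ induces one blue--green \emph{complete} bipartite graph $K_{m_1+m_2,\,m_1+m_2}$, and I apply Theorem~\ref{bip} to it if it is not split (one path, one cycle) or the decomposition above if it is split (at most three cycles); together with $P$ this stays within ``two paths and four cycles''.

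The crux, and the step I expect to be the main obstacle, is the both-split case in which some cross edge \emph{is} red, so that $W_1\cup W_2$ is no longer blue--green complete and Theorem~\ref{bip} does not apply to it. Here the cross edges lie in $K(X_1,Y_2)\cup K(X_2,Y_1)$ and may carry any colour, and the plan is to analyse them. The key flexibility I would exploit is that the cores covered by the cycles above are not forced: by choosing \emph{which} balanced cores the cycles use I can push each leftover set into a single monochromatic block, so that a leftover contributes only one cycle rather than a fresh split graph. The cleanest illustrative configuration is when \emph{all} cross edges are red: then $W_1\cup W_2$ carries a red/non-red split colouring, so two red cycles cover the red blocks $K(X_1,Y_2)$ and $K(X_2,Y_1)$ down to a balanced blue--green remainder inside one $B_i$, which the core-choice trick can often reduce to a single monochromatic cycle. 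In the mixed cases I would use blue cross edges to splice the two blue leftover cycles into one, green cross edges to splice a green core of $B_1$ to one of $B_2$, and red cross edges to attach vertices to $P$, aiming in each configuration to save the two pieces that separate ``six cycles'' from the target profiles. The hard part will be the bookkeeping: verifying that, for every distribution of colours on the cross blocks, at least two such savings are simultaneously available and that all the equal-side balance conditions needed to close the spliced cycles can be met, with the cycle conventions above absorbing the degenerate cases where a leftover is empty or a spliced cycle is a single edge. Organising this as a short case split on whether each cross block contains a blue, a green, or only red edges should bring every both-split configuration within one of the two stated profiles.
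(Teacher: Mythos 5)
Your skeleton matches the paper's: apply Lemma~\ref{lem2} with blue and green merged, dispose of any non-split blue--green graph via Theorem~\ref{bip}, decompose a split balanced blue--green $K_{m,m}$ into three monochromatic cycles, and observe that the only troublesome situation is when both bipartite graphs are split (where the naive count gives one path and six cycles). Your three-cycle decomposition of a split colouring is correct, and so is your ``no cross edge is red'' subcase. However, the heart of the lemma --- both graphs split and at least one cross edge red --- is not proved in your proposal: you explicitly defer it (``the hard part will be the bookkeeping: verifying that \dots at least two such savings are simultaneously available''), and the splicing ideas you list are a plan, not an argument. Even your ``all cross edges red'' configuration is left with a ``can often reduce'' --- and indeed your core-choice trick genuinely fails when both split classes on each side of the larger graph exceed the size of the smaller graph, so that case is not closed either.

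The missing idea is a different dichotomy from yours. The paper splits the both-split case on whether there exists a \emph{non-red} cross edge, not on whether there exists a red one. (1) If some cross edge $e$ between $A_j$ and $B_{3-j}$ is non-red, say blue, build a single blue path through $e$: inside each split graph it zig-zags through the blue block containing the relevant endpoint of $e$, consuming one class of the split entirely while removing equally many vertices from both partition classes. Each remainder is then a balanced complete bipartite graph with a V-colouring (Definition~\ref{Vcoldef}), hence two monochromatic cycles, giving two paths and four cycles in total; crucially, this argument needs only \emph{one} non-red cross edge and is indifferent to the colours of all the others, so it absorbs exactly the mixed case your proposal leaves open. (2) If all cross edges are red, two red cycles cover the two red cross blocks, eating all of the smaller graph and leaving a balanced blue--green complete bipartite remainder inside the larger one; to this apply Theorem~\ref{bip}, or the split three-cycle decomposition with one cycle downgraded to a spanning path. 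Either way the remainder costs at most one path and two cycles, which the budget of ``two paths and four cycles'' can afford --- so the single-cycle reduction you were aiming for in this case is unnecessary.
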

\begin{proof}
Say colours are red, blue, green. Lemma~\ref{lem2}  gives a red path and four sets  $A_1, B_1\subseteq V_1$ and $A_2, B_2\subseteq V_2$ (where $V_1, V_2$ are the partition classes of $K_{n,n}$), with $|A_1|=|A_2|\le |B_1|=|B_2|$, and no red edge between $A_1$ and $A_2$ or between $B_1$ and~$B_2$. We can assume the colourings in green and blue on $A_1, A_2$ and $B_1, B_2$ are both split, as by Theorem~\ref{bip}, any non-split colouring allows for a partition into   a monochromatic cycle and a monochromatic path, while a split colouring allows for a partition into three monochromatic cycles, giving a total outcome  which is as desired for the lemma.

First assume  there is a non-red edge $e$ between  $A_j$ and $B_{3-j}$ for $j=1$ or $j=2$. Say $e$ is blue. Then we can find a blue path through $e$ such that the remainder of each of $A_1\cup A_2$ and $B_1\cup B_2$ is a balanced complete  bipartite  graph with a V-colouring and can thus be partitioned into two monochromatic cycles. This gives a total of two paths and four cycles, which is as desired. 

On the other hand, if all edges between $A_1$, $B_2$, and  between $B_1$, $A_2$ are red, then there are two disjoint red cycles covering all of $A_1\cup A_2$ and part of $B_1\cup B_2$. Now either the remainder $B$ of $B_1\cup B_2$ is split, or we can apply Theorem~\ref{bip}. In both cases $B$ partitions into at most  two monochromatic cycles and a monochromatic path. Together with the red path and the two red cycles, we obtain the desired  partition.
\end{proof}

\end{document}